\newcommand{\autorefcheckize}[1]{%
  \expandafter\let\csname @@\string#1\endcsname#1%
  \expandafter\DeclareRobustCommand\csname relax\string#1\endcsname[1]{%
    \csname @@\string#1\endcsname{##1}\wrtusdrf{##1}}%
  \expandafter\let\expandafter#1\csname relax\string#1\endcsname
}
\newcommand{\abs}[1]{\left\lvert#1\right\rvert}
\newcommand{\set}[1]{\left\{#1\right\}}
\newcommand{\hin}[2]{\left\langle#1,#2\right\rangle}
\newcommand{\rin}[2]{\left(#1,#2\right)}
\newcommand{\field}[1]{\mathbb{#1}}
\newcommand{\kh}[1]{\left(#1\right)}
\newcommand{\zkh}[1]{\left[#1\right]}
\newcommand{\R}{\field{R}}
\newcommand{\Com}{\field{C}}
\newcommand{\Lg}[1]{\mathrm{#1}}
\newcommand{\vect}[1]{\mathbf{#1}}
\newcommand{\To}{\longrightarrow}
\newcommand{\Rmn}[1]{\uppercase\expandafter{\romannueral#1}}
\theoremstyle{plain}
\newtheorem{theorem}{Theorem}[section]
\newtheorem{known}{Theorem}
\newaliascnt{lem}{theorem}
\newtheorem{lem}[lem]{Lemma}
\newaliascnt{cor}{theorem}
\newtheorem{cor}[cor]{Corollary}
\newaliascnt{prop}{theorem}
\newaliascnt{exam}{theorem}
\newtheorem{exam}[exam]{Example}
\newtheorem*{conj}{Conjecture}
\theoremstyle{remark}
\newtheorem{rem}{Remark}[section]
\theoremstyle{definition}
\numberwithin{equation}{section}
\begin{document}

\title[sphere theorems]{Some sharp differential sphere theorems for nonnegative scalar curvature manifolds
}


\author{Qing Cui}
\address{School of Mathematics\\
Southwest Jiaotong University\\
611756 Chengdu, Sichuan, China}
\email{cuiqing@swjtu.edu.cn}

\author[Sun]{Linlin Sun}
\address{School of Mathematics and Statistics \& Computational Science Hubei Key Laboratory, Wuhan University, 430072 Wuhan, Hubei, China}
\email{sunll@whu.edu.cn}

\thanks{This work is partially supported by National Natural Science Foundation
 of China (Grant No. 11601442) and
Fundamental Research Funds for the Central Universities (Grant No. 2682016CX114, 2042018kf0044
).}
\subjclass[2010]{53C20, 53C40}


\date{\today}

\begin{abstract}
In this paper, we obtain several new intrinsic and extrinsic differential sphere theorems via Ricci flow.
 For intrinsic case, we show that a closed simply connected $n(\ge 4)$-dimensional Riemannian
 manifold $M$ is diffeomorphic to $\Lg{S}^n$
 if one of the following
 conditions holds pointwisely:
$$
 (i)\   R_0>\kh{1-\frac{24(\sqrt{10}-3)}{n(n-1)}}K_{max};\quad
 \ (ii)\   \frac{Ric^{[4]}}{4(n-1)}>\kh{1-\frac{6(\sqrt{10}-3)}{n-1}}K_{max}.$$
 Here $K_{max}$, $Ric^{[k]}$ and $R_0$ stand for the maximal sectional curvature,
 the $k$-th weak Ricci curvature and
 the normalized scalar curvature.
 For extrinsic case, i.e.,  when $M$ is a closed simply connected $n(\ge 4)$-dimensional submanifold immersed in $\bar{M}$.
 We prove that $M$ is diffeomorphic to $\Lg{S}^n$ if it satisfies some pinching curvature conditions. The only involved extrinsic quantities in our pinching conditions are the maximal sectional curvature $\bar K_{max}$ and the squared norm of mean curvature vector $\abs{H}^2$. More precisely,
 we show that $M$ is  diffeomorphic to $\Lg{S}^n$
 if one of the following conditions
 holds:
\begin{itemize}
  \item[(1)] $R_0\ge \kh{1-\frac{2}{n(n-1)}}\bar{K}_{max}   +\frac{n(n-2)}{(n-1)^2}\abs{H}^2$, and strict
 inequality is achieved at some point;
 \item[(2)] $\dfrac{Ric^{[2]}}{2}\ge (n-2)\bar K_{max}+\frac{n^2}{8}\abs{H}^2,$ and strict
 inequality is achieved at some point;
 \item[(3)] $\dfrac{Ric^{[2]}}{2} \ge\frac{n(n-3)}{n-2}\kh{\bar K_{max}+\abs{H}^2},$ and strict
 inequality is achieved at some point.
 \end{itemize}
 It is worth pointing out that, in the proof of extrinsic case, we apply suitable complex orthonormal frame
  and simplify the calculations considerably.
 We also emphasize that both of the pinching constants in  (2) and (3) are optimal for $n=4$.

 \vspace{2ex}

  \noindent{\it \emph{\small Keywords and phrases:} sphere theorems, isotropic curvature, positive scalar curvature, submanifold}

\end{abstract}%

\maketitle

\section{Introduction}
It is a basic problem in Riemannian geometry  to classify closed Riamannian manifolds in the category
of either topology, diffeomorphism, or isometry under some curvature conditions. Among a huge literature on this problem,
the uniqueness of sphere  under pinched curvatures
accounts for a large proportion. One of the reasons for studying uniqueness of sphere is the simpleness of its topology.
These uniqueness results are usually called topology sphere theorems (in the homeomorphism sense), differential sphere theorems
(in the diffeomorphism sense),
and isometry (or rigidity) sphere theorems (in the isometry sense).

Suppose $M$ is a closed  $n$-dimensional Riemannian manifold. If $n=2$ and $M$ has positive Gaussian curvature, then
 one can easily see from Gauss-Bonnet formula that $M$ must be a topological sphere. Since the differential structure is unique on
 a 2-sphere, $M$ must be diffeomorphic to a standard 2-sphere $\Lg{S}^2$. When $n=3$, the Riemannian curvature tensor
 is uniquely determined by the Ricci tensor. Hamilton \cite{Ham82} showed that if a closed 3-dimensional manifold has a metric with positive Ricci curvature,
 then it must be diffeomorphic to a spherical space form. Moreover, if $M$ is simply connected, $M$ must be diffeomorphic to $\Lg{S}^3$. Hamilton \cite{Ham86} classified all closed 3-dimensional Riemannian manifold with nonnegative Ricci curvature.
 Therefore, in this paper, we focus our attention on the dimension $n\ge 4$ and study sphere theorems with pinched curvatures.

The study of sphere theorems under pinched sectional curvatures goes back to a question of Hopf.
In 1951, Rauch \cite{Rau51} showed that a closed simply connected  Riemannian manifold with globally $\delta$-pinched $(\delta \approx 0.75)$
sectional curvature  is homeomorphic to a sphere. Rauch also proposed a question of what the optimal pinching constant should be. Berger \cite{Ber60}
and Killingenberg \cite{Kli61} proved that $\delta = \frac{1}{4}$ is the optimal pinching constant. Since on a sphere of arbitrary dimension, the differential
structure is not necessarily unique, it is natural to ask that if $\frac{1}{4}$-pinched sectional curvature is
necessary for a differential sphere? This question was finally answered by Brendle and Schoen \cite{BreSch09} via the Ricci flow.

Another important differential sphere theorem via Ricci flow is due to B\"ohm and Wilking \cite{BohWil08}. They proved that closed manifolds with $2$-positive curvature operator are spherical space forms.  Moreover, Berger \cite{Ber60} classified  all manifolds with weakly $1/4$-pinched curvatures in the homeomorphism sense.   Brendle and Schoen \cite{BreSch08} provided a classification, up to a diffeomorphism,  of all manifolds with weakly $1/4$-pinched curvatures.
For more sphere theorems under pinched
sectional curvatures, we refer the reader to a good survey book of Brendle \cite{Bre10a}.

It is well known that the  complex projective space $\mathbb CP^n$ with Fubini-Study metric has exactly pointwise $\frac{1}{4}$-pinched
sectional curvature (see also \autoref{exam}). Therefore, Brendle-Schoen's theorem is optimal for even dimension.
It is natural to study sphere theorems under other pinched curvature conditions. In 1990's, Yau collected
some open problems and  he wrote in Problem 12 (\cite{Yau93}):

\emph{`` The famous pinching problem says that on a compact simply connected manifold if
$K_{min} > \frac{1}{4}K_{max}$, then the manifold is homeomorphic to a sphere.
If we replace
$K_{max}$ by normalized scalar curvature, can we deduce similar pinching results? "}

Classical examples (see \cite[Example 1]{GuXu12}, see also \autoref{exam} in this paper) show
that the pinching constant is at least $\frac{n-1}{n+2}$. Therefore
Yau's problem can be written in a more concrete way (\cite[Yau Conjecture 1]{GuXu12}):
\begin{conj}[Yau 1990]
  Let $(M^n, g)$ be a closed simply connected Riemannian manifold.
  Denote by $R_0$ the normalized scalar curvature of $M^n$. If
$K_{min} > \frac{n-1}{n+2}R_0$, then $M^n$ is diffeomorphic to a standard sphere $\Lg{S}^n$.
\end{conj}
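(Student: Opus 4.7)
The plan is to follow the Ricci-flow program of B\"ohm--Wilking and Brendle--Schoen. The guiding principle is that if one can show the pointwise hypothesis $K_{\min}>\tfrac{n-1}{n+2}R_0$ forces the curvature operator into some Ricci-flow-invariant cone on which a convergence theorem is available---most plausibly the PIC2 cone, i.e.\ the cone of curvature operators $R$ such that $M\times\R^2$ has positive isotropic curvature---then the normalized Ricci flow deforms $g$ smoothly to a metric of constant sectional curvature. Since $M$ is closed and simply connected, the limit must be $\Lg{S}^n$, which gives the desired diffeomorphism.

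The main step, and the principal obstacle, is algebraic. One must verify that for every orthonormal 4-frame $\{e_1,e_2,e_3,e_4\}$ and every $\lambda,\mu\in[0,1]$,
\[
R_{1313}+\lambda^2 R_{1414}+\mu^2 R_{2323}+\lambda^2\mu^2 R_{2424}-2\lambda\mu R_{1234}\ge 0.
\]
Each diagonal sectional curvature is bounded below by $K_{\min}$. The cross term would naively be controlled via Berger's bound $|R_{1234}|\le\tfrac{2}{3}(K_{\max}-K_{\min})$, and the identity $R=2\sum_{i<j}R_{ijij}=n(n-1)R_0$ should be used to express $K_{\max}$ in terms of $K_{\min}$ and $R_0$. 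Unfortunately, a quick computation shows that the resulting upper bound on $K_{\max}-K_{\min}$ already scales like $nK_{\min}$ under the pinching hypothesis, so the naive estimate is far from tight; this is the core difficulty.

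The sharpness of the constant $\tfrac{n-1}{n+2}$, which is saturated on classical product examples, means any slack in the algebraic bounds is fatal. A plausible route is to follow B\"ohm--Wilking and construct a one-parameter family of Ricci-flow-invariant cones interpolating between the cone defined by the pinching hypothesis and the PIC2 cone, using ODE arguments on the curvature tensor along the flow to show that the curvature is eventually pushed into PIC2. Alternatively, one might bootstrap from the weaker pinching results proved earlier in this paper (in terms of $K_{\max}$ and $Ric^{[k]}$) by combining them with evolution estimates for $R_0$ and $K_{\min}$ to gradually reduce the effective pinching constant toward $\tfrac{n-1}{n+2}$. Either way, the algebraic identity encoding the equality case---rather than a bound-heavy term-by-term estimate---will almost certainly be needed to recover the optimal constant.
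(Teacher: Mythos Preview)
This statement is listed in the paper as a \emph{conjecture}, not a theorem; the paper does not prove it. What the paper actually establishes is the weaker pinching result (their \autoref{Kmin}): if $K_{\min}>\bigl(1-\tfrac{12}{n^2-n+12}\bigr)R_0$ then $M$ is diffeomorphic to $\Lg{S}^n$. That constant agrees with $\tfrac{n-1}{n+2}$ only at $n=4$ (where, as the paper notes, Costa--Ribeiro had already settled the conjecture); for $n\ge 5$ the paper's constant is $1-O(1/n^2)$ while Yau's is $1-O(1/n)$, so a substantial gap remains open.

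Your proposal is therefore not being compared against any proof in the paper, and in fact your write-up is not a proof either---it is an honest research outline. You correctly identify the target inequality (the PIC2 condition), the obvious first attempt (Berger's bound $|R_{1234}|\le\tfrac23(K_{\max}-K_{\min})$ combined with the trace identity), and the reason this fails: the naive term-by-term estimate loses a factor of roughly $n$, which is exactly the discrepancy between the paper's partial result and the conjectured sharp constant. But the two suggested repairs---building a B\"ohm--Wilking-style family of invariant cones, or bootstrapping along the flow---are left entirely unspecified. Neither is known to work here; if either did, the conjecture would be a theorem. So the genuine gap is precisely the one you flag yourself: the algebraic step that would force PIC2 from the sharp pinching $K_{\min}>\tfrac{n-1}{n+2}R_0$ is missing, and nothing in your outline supplies it.
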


If $K_{min}>\kh{1-\frac{6}{n^2-n+6}}R_0,\  n\geq4$, Gu and Xu \cite{GuXu12} proved $M$ must be diffeomorphic
to a standard sphere, which partially answered Yau's problem. Moreover, if $M$ is an Einstein manifold,
Gu and Xu \cite{XuGu14} proved the pinching constant $\frac{n-1}{n+2}$ is optimal and gave an isometric sphere theorem.
When the dimension $n=4$, Costa and Ribeiro Jr.
\cite{CosRib14} proved Yau's conjecture. They actually used a weaker assumption
by replacing sectional curvature by biorthogonal curvature condition. We can prove when
$K_{min}>\left(1-\frac{12}{n^2-n+12}\right)R_0$, $M$ must be diffeomorphic to $\Lg{S}^n$.
However, when we finish this paper, we know from Professor Hong-Wei Xu that he and his collaborators
 obtained the same result \cite{GuXuZha17} independently. We would like to thank Professor Hong-Wei Xu for sending their manuscript \cite{GuXuZha17}. For readers' convenience, we still give a complete proof of this result in Section 3 (see \autoref{Kmin}).

It is also interesting to study sphere theorems with normalized scalar curvature pinched by $K_{max}$.
Gu and Xu \cite[Theorem 1]{GuXu12} showed that if $R_0> \frac{12}{5n(n-1)}K_{max}$, $n\ge 4$, then $M$ is diffeomorphic
to a spherical space form. Based on an example of $\mathbb OP^2$, the authors also posed a Conjecture (see \cite[Conjecture 1]{GuXu12}):
\begin{conj}
  Let $M^n(n\ge 4)$ be a closed and simply connected Riemannian manifold.
If $R_0 > \frac{3}{5}K_{max}$, then $M$ is diffeomorphic to $\Lg{S}^n$.
\end{conj}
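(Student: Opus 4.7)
The plan is to use the Hamilton-Brendle-Schoen Ricci-flow framework. My target is to show that the pointwise pinching $R_0>\tfrac{3}{5}K_{max}$ forces the curvature operator at every point into Brendle's PIC$_1$ cone, i.e.\ into the cone of algebraic curvature tensors $R$ for which $R\oplus 0$ has positive isotropic curvature on $\R^{n+1}$. Since this cone is invariant under the Hamilton ODE and hence preserved by the Ricci flow (by Brendle's convergence theorem), the normalized flow starting from $g$ would converge, after rescaling, to a metric of constant positive sectional curvature, and together with simple connectedness this yields a diffeomorphism $M\approx\Lg{S}^n$.

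The entire content of the conjecture is therefore the following algebraic statement: for every algebraic curvature tensor on $\R^n$ with $R_0>\tfrac{3}{5}K_{max}$, every orthonormal $4$-frame $\{e_1,e_2,e_3,e_4\}$, and every $\lambda\in[0,1]$,
\[
R_{1313}+\lambda^2 R_{1414}+R_{2323}+\lambda^2 R_{2424}-2\lambda R_{1234}>0.
\]
My plan to attack this is (i) to bound $R_{1313}+R_{1414}+R_{2323}+R_{2424}$ from below by trading the identity $\sum_{i<j}R_{ijij}=\tfrac{n(n-1)}{2}R_0$ against the uniform upper bound $R_{ijij}\le K_{max}$ on the remaining $\binom{n}{2}-4$ pairs (including $R_{1212}$ and $R_{3434}$); (ii) to estimate $|R_{1234}|$ via Berger's inequality $|R_{ijkl}|\le\tfrac{2}{3}(K_{max}-K_{min})$ after extracting a lower bound for $K_{min}$ from the scalar-curvature identity; (iii) to optimize in $\lambda$.

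The main obstacle is dimensional sharpness. The naive scalar-curvature trade in step (i) already fails to yield a positive bound for $n\ge 5$ under the constant $\tfrac{3}{5}$, since one discards of order $n^2$ sectional curvatures while only gaining the small deficit $R_0-K_{max}>-\tfrac{2}{5}K_{max}$. Moreover the constant $\tfrac{3}{5}$ is itself attained by the Cayley plane $\mathbb OP^2$ (where $n=16$, $K_{max}=4$, $R=576$, hence $R_0/K_{max}=\tfrac{3}{5}$), so any successful argument must be sensitive enough to recognize this borderline example and any deformation of its curvature operator outside PIC$_1$. A promising route is to decompose $R$ into its scalar, traceless-Ricci and Weyl pieces, to rewrite the isotropic-curvature expression in a complex orthonormal frame in the spirit of the extrinsic part of this paper, and to reduce the problem to an eigenvalue inequality for the Weyl component on $\Lambda^2\R^n$; identifying the Cayley-plane model as the unique extremal algebraic obstruction, and controlling its deformations within the space of algebraic curvature tensors satisfying the pinching, is the delicate step on which the whole conjecture rests.
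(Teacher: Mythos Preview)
The statement you are attempting is listed in the paper as an \emph{open conjecture} (due to Gu--Xu, based on the Cayley plane $\mathbb OP^2$), not as a theorem. There is no proof in the paper to compare your proposal against. What the paper does prove is the strictly weaker \autoref{Kmax}, with the $n$-dependent pinching constant $1-\tfrac{24(\sqrt{10}-3)}{n(n-1)}$ in place of $\tfrac{3}{5}$; for $n=4$ this is $7-2\sqrt{10}\approx 0.675$, and it tends to $1$ as $n\to\infty$, so it is far from the conjectured uniform $\tfrac{3}{5}$.

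Your outline is honest about exactly this point: the Brendle PIC$_1$ framework and the reduction to the algebraic inequality are correct and are precisely what the paper uses for its weaker theorem (via \autoref{lem2} and the estimate \eqref{eq:new2-4}), but the ``naive scalar-curvature trade'' you describe in step (i) is essentially the argument that produces the paper's constant, not $\tfrac{3}{5}$. Your step (ii) does not help, since extracting a lower bound on $K_{min}$ from the scalar identity again costs $O(n^2)$ terms and gives nothing usable for large $n$. The final paragraph of your proposal is an accurate diagnosis rather than a proof: identifying $\mathbb OP^2$ as the extremal algebraic curvature tensor and controlling all PIC$_1$-violating deformations under the pinching $R_0>\tfrac{3}{5}K_{max}$ is exactly the unresolved part of the conjecture, and you have not supplied an argument for it. In short, your proposal is a correct restatement of why the conjecture is hard, not a proof of it.
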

We also get  a new differential sphere theorem in this direction:

 \begin{theorem}\label{Kmax}
  Let $M^n (n\ge 4)$ be a closed and simply connected Riemannian manifold.  If
$$
R_0>\kh{1-\frac{24(\sqrt{10}-3)}{n(n-1)}}K_{max},
$$
then $M$ is diffeomorphic to $\Lg{S}^n$.
\end{theorem}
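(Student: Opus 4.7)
My approach is to apply the Ricci flow together with a convergence theorem of Brendle: if the curvature tensor of $M$ lies, at every point, in the strict PIC1 cone (that is, $M\times\R$ has pointwise positive isotropic curvature), then the normalized Ricci flow starting from $g_0$ converges to a metric of constant positive sectional curvature, and the simple-connectedness hypothesis then forces $M\cong\Lg{S}^n$. The entire problem therefore reduces to the pointwise algebraic claim
$$I(\lambda)\;:=\;R_{1313}+R_{2323}+\lambda^2 R_{1414}+\lambda^2 R_{2424}-2\lambda R_{1234}\;>\;0$$
for every orthonormal $4$-frame $\{e_1,e_2,e_3,e_4\}$ at every $p\in M$ and every $\lambda\in[0,1]$.

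The main tools are three. First, the scalar-curvature identity $R=2\sum_{a<b}R_{abab}=n(n-1)R_0$, paired with the upper bound $R_{abab}\le K_{max}$ on every sectional not among the four I want to isolate, translates the hypothesis $R_0>(1-\delta)K_{max}$, where $\delta:=\frac{24(\sqrt{10}-3)}{n(n-1)}$, into effective lower bounds on the positive pieces $R_{1313}+R_{2323}$, $R_{1414}+R_{2424}$ and on individual sectionals. Second, the same identity applied to an arbitrary $2$-plane yields the pointwise bound
$$K_{min}\;\ge\;\frac{R}{2}-\Bigl(\binom{n}{2}-1\Bigr)K_{max},$$
which controls the spread $K_{max}-K_{min}$ linearly in $\delta$. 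Third, Berger's algebraic inequality $|R_{1234}|\le \tfrac{2}{3}(K_{max}-K_{min})$ absorbs the cross term. Assembling these into $I(\lambda)$ yields a pointwise estimate of the form
$$I(\lambda)>\bigl[\,p(\lambda)-q(\lambda)\,n(n-1)\delta\,\bigr]K_{max}$$
with explicit nonnegative polynomials $p,q$ in $\lambda$.

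The main obstacle — and the source of the exotic constant $24(\sqrt{10}-3)$ in the statement — is the one-variable optimization
$$n(n-1)\delta\;<\;\inf_{\lambda\in[0,1]}\frac{p(\lambda)}{q(\lambda)}.$$
The first-order condition should be a quadratic in $\lambda$ whose discriminant involves the integer $10$, forcing the optimizer $\lambda^{*}$ and the infimum itself to take the form $a+b\sqrt{10}$ for rational $a,b$; one expects the infimum to be precisely $24(\sqrt{10}-3)\approx 3.89$. Realizing this sharp value (rather than a cruder bound such as $3$, which emerges from a naive packaging of the three estimates above) will require a careful grouping of the sectional lower bounds according to the specific convex combination dictated by $\lambda$ in $I(\lambda)$, so that no slack is wasted in the Berger step. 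Once $I(\lambda)>0$ is confirmed pointwise, Brendle's Ricci-flow convergence theorem closes the argument.
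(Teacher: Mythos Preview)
Your overall strategy is exactly the paper's: reduce to showing that $M\times\R$ has positive isotropic curvature and then invoke Brendle's convergence theorem. The problem is entirely in the algebraic step, and the tools you list cannot reach the constant $24(\sqrt{10}-3)$.

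Concretely: using Berger's bound $|R_{1234}|\le\tfrac{2}{3}(K_{max}-K_{min})$ together with the scalar-curvature lower bound on $K_{min}$ is lossy in two places. First, you spend the whole of the scalar identity once to control $K_{max}-K_{min}$ and then again to control $R_{1313}+R_{2323}$; the two estimates are not independent. Second, Berger's inequality itself is not the sharp relation between $R_{1234}$ and sectional curvatures for this problem. If you run your scheme you get, for $c:=n(n-1)\delta$, an inequality like $I(\lambda)>\bigl[(1+\lambda^2)(2-c)-\tfrac{2}{3}\lambda c\bigr]K_{max}$ (up to harmless variants), and no grouping of these three ingredients will push the admissible $c$ past roughly $2$; certainly not to $24(\sqrt{10}-3)\approx 3.89$. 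Your sentence ``will require a careful grouping \dots so that no slack is wasted in the Berger step'' is precisely where the missing idea sits.

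What the paper does instead is replace Berger's inequality by the exact Karcher-type identity
\[
12R_{1234}=-4(R_{1212}+R_{3434})-2\sum_{\{i,j\}\subset\{1,3\},\{2,4\}\text{ etc.}}R_{ijij}+\sum_{\text{four diagonal planes}}K(\cdot,\cdot),
\]
and substitutes it \emph{inside} a one-parameter splitting: write
\[
\sum_{1\le i<j\le 4}R_{ijij}=\Bigl[\text{six sectionals with modified coefficients}\Bigr]+\frac{\varepsilon}{2(1+\lambda^2)}I(\lambda),
\]
use the identity to absorb the $R_{1234}$ term into sectionals of diagonal $2$-planes, and then bound every sectional above by $K_{max}$. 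This is legitimate only if all resulting coefficients are nonnegative for every $\lambda\in[0,1]$; the binding constraint turns out to be
\[
1-\frac{\varepsilon(3+\lambda)}{6(1+\lambda^2)}\ge 0,
\]
whose worst $\lambda$ is $\lambda^*=\sqrt{10}-3$, giving $\varepsilon\le 12(\sqrt{10}-3)$. Taking $\varepsilon_0=12(\sqrt{10}-3)$ and combining with $n(n-1)R_0=2\sum_{i<j}R_{ijij}$ and $R_{ijij}\le K_{max}$ for the remaining $\binom{n}{2}-6$ planes yields $I(\lambda)>0$ exactly under the hypothesis $R_0>(1-\tfrac{2\varepsilon_0}{n(n-1)})K_{max}$. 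So the constant you are looking for does not come from optimizing $p(\lambda)/q(\lambda)$ \emph{after} applying Berger, but from the feasibility region of a sign constraint on coefficients \emph{before} any inequality is applied.
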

\begin{rem}
Under the assumtion
\begin{align*}
  R_0>\kh{1-\frac{6}{n(n-1)}}K_{max},
\end{align*}
we can prove $M$ has positive isotropic curvature, see \autoref{rem1}.
 Gu-Xu-Zhao \cite{GuXuZha17} also obtained this result independently.
\end{rem}

For pinched  Ricci curvature and sectional curvature, we also have the following sphere theorem.

\begin{theorem}\label{2thRic}
  Let $M^n (n\ge 4)$ be a closed and simply connected Riemannian manifold. If
$$
\frac{Ric_M^{[4]}}{4(n-1)}>\kh{1-\frac{6(\sqrt{10}-3)}{n-1}}K_{max},
$$
then $M$ is diffeomorphic to $\Lg{S}^n$.
\end{theorem}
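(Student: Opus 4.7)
The plan is to reduce \autoref{2thRic} to Brendle's diffeomorphism sphere theorem via Ricci flow for manifolds whose product with $\R^2$ has positive isotropic curvature (PIC2); see Brendle \cite{Bre10a}. That convergence result guarantees that a compact simply-connected $n$-manifold ($n\ge 4$) with pointwise PIC2 is diffeomorphic to $\Lg{S}^n$. It therefore suffices to establish PIC2, i.e.\ to show that, for any $p\in M$, any orthonormal 4-frame $\{e_1,e_2,e_3,e_4\}\subset T_pM$, and any $\lambda,\mu\in[0,1]$,
\[
I(\lambda,\mu)\;:=\;K_{13}+\lambda^2 K_{14}+\mu^2 K_{23}+\lambda^2\mu^2 K_{24}-2\lambda\mu R_{1234}\;>\;0.
\]

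For the lower bound on $I(\lambda,\mu)$, first extend $\{e_1,\dots,e_4\}$ to an orthonormal basis and use the very definition of the $4$-th weak Ricci curvature:
\[
Ric^{[4]}(p)\;\le\;\sum_{i=1}^4 Ric(e_i,e_i)\;=\;2\sum_{1\le i<j\le 4}K_{ij}+\sum_{i=1}^{4}\sum_{j=5}^{n}K_{ij}.
\]
Bounding every $K_{ij}$ on the right by $K_{max}$ yields a lower bound
\[
K_{13}+K_{14}+K_{23}+K_{24}\;\ge\;\tfrac12 Ric^{[4]}-2(n-3)K_{max}.
\]
Pairing this with Berger's algebraic inequality $|R_{1234}|\le\tfrac{2}{3}(K_{max}-K_{min})$ (or a more refined local version) controls the cross-term in $I(\lambda,\mu)$. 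One should also replace the asymmetric weighted sum $\lambda^2 K_{14}+\mu^2 K_{23}+\lambda^2\mu^2 K_{24}$ by symmetric combinations, either by averaging over SO(2)-rotations of $\{e_1,e_2\}$ and $\{e_3,e_4\}$ within their 2-planes, or by a worst-case reduction that makes the $Ric^{[4]}$ estimate applicable to the weighted sum. The resulting bound on $I(\lambda,\mu)$ becomes a polynomial in $\lambda,\mu$ whose coefficients depend linearly on $Ric^{[4]}/K_{max}$ and $n$.

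Finally, one minimizes $I(\lambda,\mu)$ over $(\lambda,\mu)\in[0,1]^2$; the stationarity equations produce a quadratic whose discriminant yields the pinching threshold. Specifically, $\sqrt{10}-3$ appears as the unique positive root of $x^{2}+6x-1=0$, and the hypothesis $Ric^{[4]}/(4(n-1))>(1-6(\sqrt{10}-3)/(n-1))K_{max}$ is precisely what forces $I(\lambda,\mu)>0$ at every $(\lambda,\mu)$. The main obstacle is this last step: making the minimization over $\lambda,\mu$ compatible with the Ricci estimate (which naturally controls only the symmetric sum $K_{13}+K_{14}+K_{23}+K_{24}$) and with Berger's bound on $R_{1234}$ in a single chain of inequalities sharp enough to yield the stated constant. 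Once PIC2 is verified pointwise, Brendle's theorem delivers the diffeomorphism with $\Lg{S}^n$ and completes the proof.
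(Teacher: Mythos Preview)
Your proposal has the right overall architecture but two concrete problems prevent it from going through.

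First, the target condition. The paper proves PIC1 (i.e.\ $M\times\R$ has positive isotropic curvature), which by \autoref{diffsphere2} already suffices for the diffeomorphism. You aim instead for PIC2, a strictly stronger condition with two parameters $\lambda,\mu$. This is not fatal, but it makes the optimization needlessly harder and is not how the constant $6(\sqrt{10}-3)$ actually arises.

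Second, and this is the real gap, your control of the cross term $R_{1234}$ via Berger's inequality $|R_{1234}|\le\tfrac{2}{3}(K_{max}-K_{min})$ introduces $K_{min}$, which is \emph{not} bounded by the hypothesis. The assumption only pins down $Ric^{[4]}$ against $K_{max}$; nothing prevents $K_{min}$ from being very negative, and your chain of inequalities collapses. Waving at ``a more refined local version'' does not repair this: you need a bound on $R_{1234}$ that involves only $K_{max}$ and the specific sectional curvatures $K_{ij}$ already appearing in $I$. The paper achieves exactly this through the Karcher-type identity (\autoref{lem2}), which writes $12R_{1234}$ as a combination of $\sum_{i<j}R_{ijij}$, the four $K_{13},K_{14},K_{23},K_{24}$, and four sectional curvatures of diagonal planes $K(e_i\pm e_k,e_j\pm e_l)$. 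Each of those diagonal terms is $\le 4K_{max}$, so $R_{1234}$ is controlled by $K_{max}$ alone plus quantities already in play. This identity is the missing ingredient; with it, one shows (as in the proof of \autoref{Kmax}) that
\[
\sum_{1\le i<j\le 4}R_{ijij}\le(6-\varepsilon_0)K_{max}+\frac{\varepsilon_0}{2(1+\lambda^2)}\Bigl(R_{1313}+R_{2323}+\lambda^2(R_{1414}+R_{2424})-2\lambda R_{1234}\Bigr),
\]
where $\varepsilon_0=12(\sqrt{10}-3)$ is the largest value keeping all coefficients nonnegative over $\lambda\in[0,1]$. Your observation that $\sqrt{10}-3$ solves $x^2+6x-1=0$ is correct: that polynomial is precisely the stationarity equation for the critical coefficient $1-\frac{\varepsilon_0(3+\lambda)}{6(1+\lambda^2)}$. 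Combining the displayed inequality with $Ric^{[4]}=2\sum_{i<j\le4}R_{ijij}+\sum_{i\le4}\sum_{j\ge5}R_{ijij}$ and bounding the off-block terms by $4(n-4)K_{max}$ gives PIC1 directly.
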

\begin{rem}
Gu-Xu-Zhao \cite{GuXuZha17} actually proved $M$ is diffeomorphic to
$ \Lg{S}^n$ when $M$ satisfies $$
\frac{Ric_M}{n-1}>\kh{1-\frac{3}{2(n-1)}}K_{max}.$$
\end{rem}

\vspace{2ex}

It is also of interest to  study sphere theorems for  submanifolds. In recent years, many authors  investigated
 related problems and plenty of works were obtained (e.g. \cite{Hui84, Hui86, AndBak10, LiuXuYe13, XuGu10, XuGu14, XuGu13, GuXu12, XuTia11, Bar11} and therein).   We also get sphere theorems for submanifolds corresponding to
 \autoref{Kmax} and \autoref{2thRic}, see \autoref{subkmin}, \autoref{subkmax} and \autoref{subric}.
Besides these results, we use complex orthonormal frames  to obtain the following new sphere theorems. The assumptions
of these theorems only involve $R_0$, $Ric^{[2]}$, $\bar{K}_{max}$ and $\abs{H}^2$.

We prove the following three theorems which are
generalizations of
Gu-Xu's results \cite[Theorem 3, Theorem 4]{GuXu12},  Xu-Gu's result \cite[Theorem 1.1]{XuGu10}, Anderws-Baker's result \cite[Theorem 1]{AndBak10},  Liu-Xu-Ye-Zhao's result \cite[Corollary 1.2]{LiuXuYe13} and Xu-Tian's result \cite[Theorem 1.1]{XuTia11}.
\begin{theorem}\label{subR0}
Suppose $M^n (n\geq4)$ is a closed and simply connected  submanifold of $\bar{M}^N$ satisfying
\begin{align*}
R_0\ge \kh{1-\frac{2}{n(n-1)}}\bar{K}_{max}   +\frac{n(n-2)}{(n-1)^2}\abs{H}^2,
\end{align*}
with strict inequality at some point, then $M$ is diffeomorphic to $\Lg{S}^n$.
\end{theorem}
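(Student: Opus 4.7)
The plan is to derive from the scalar-curvature pinching, via the Gauss equation, a bound of Andrews--Baker type on the traceless second fundamental form, then show this bound forces $M$ to satisfy a positive isotropic curvature type condition (weakly, with strict inequality at the marked point), and finally invoke Brendle's Ricci-flow convergence theorem to conclude that $M$ is diffeomorphic to $\Lg{S}^n$.

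First I would trace the Gauss equation: for the second fundamental form $A$ and mean curvature vector $H$,
\[
R_M=\sum_{i\neq j}\bar R_{ijij}+n^2\abs{H}^2-\abs{A}^2.
\]
Since $R_M=n(n-1)R_0$ and $\sum_{i\neq j}\bar R_{ijij}\le n(n-1)\bar K_{max}$, the hypothesis rearranges to
\[
\abs{A}^2\le 2\bar K_{max}+\frac{n^2}{n-1}\abs{H}^2,
\]
with strict inequality at the prescribed point. Decomposing $A=\mathring A+\frac{g\otimes H}{n}$ into traceless plus trace parts rewrites this as $\abs{\mathring A}^2\le 2\bar K_{max}+\frac{n}{n-1}\abs{H}^2$, a classical pinching bound of the kind appearing in the works of Andrews--Baker, Xu--Gu and Liu--Xu--Ye--Zhao.

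Next, for any orthonormal $4$-frame $\{e_1,e_2,e_3,e_4\}\subset T_pM$, I would use the Gauss equation to express the intrinsic isotropic curvature $K_{\mathrm{iso}}=R_{1313}+R_{1414}+R_{2323}+R_{2424}-2R_{1234}$ as the corresponding ambient quantity plus a second-fundamental-form contribution of the form
\[
\sum_\alpha\bigl[(h^\alpha_{11}+h^\alpha_{22})(h^\alpha_{33}+h^\alpha_{44})-(h^\alpha_{13}+h^\alpha_{24})^2-(h^\alpha_{14}-h^\alpha_{23})^2\bigr].
\]
Introducing the isotropic complex frame $\zeta_1=\frac{1}{\sqrt 2}(e_1+\mi e_2)$, $\zeta_2=\frac{1}{\sqrt 2}(e_3+\mi e_4)$ identifies this SFF contribution with a mean-curvature product minus $4\sum_\alpha\abs{h^\alpha(\zeta_1,\bar\zeta_2)}^2$, thereby turning a sign-indefinite real quadratic form into a transparent sum of complex moduli; this is the computational simplification advertised in the abstract. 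Bounding the ambient piece by $\bar K_{max}$ and the $\sum_\alpha\abs{h^\alpha(\zeta_1,\bar\zeta_2)}^2$-type terms using $\abs{\mathring A}^2$ from the previous paragraph then gives $K_{\mathrm{iso}}\ge 0$ pointwise with strict inequality at the marked point. Repeating the argument on a $4$-frame inside $T_pM\oplus\R^2$ (allowing up to two directions along the flat factor) upgrades the conclusion to the PIC2 condition.

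The final step is standard: PIC2 is preserved by Ricci flow through the Brendle--Schoen curvature-cone argument, the strong maximum principle propagates the strict positivity instantly to all of $M$ for $t>0$, and Brendle's convergence theorem then yields that the normalized Ricci flow deforms $g$ to a constant-curvature metric, which together with simple connectedness and $n\ge 4$ gives the diffeomorphism $M\cong\Lg{S}^n$. The main obstacle is the algebraic estimate in the second paragraph: one must extract sharp constants relating the SFF contribution to $K_{\mathrm{iso}}$ and to $\abs{\mathring A}^2$ that exactly match the bound $2\bar K_{max}+\frac{n}{n-1}\abs{H}^2$ coming from the scalar-curvature hypothesis. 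The complex frame is critical here, since a brute-force real-variable Cauchy--Schwarz loses constants and would fail to close the argument at the precise pinching $1-\frac{2}{n(n-1)}$ and $\frac{n(n-2)}{(n-1)^2}$.
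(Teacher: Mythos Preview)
Your two-step decoupling—first extracting the pointwise bound $\abs{\mathring A}^2\le 2\bar K_{max}+\frac{n}{n-1}\abs{H}^2$, then using that bound to force PIC/PIC2—has a genuine gap for a general ambient $\bar M$. In your first step you replaced $\sum_{i\neq j}\bar R_{ijij}$ by $n(n-1)\bar K_{max}$; that is where the information you need later is discarded. In your second step the Gauss equation gives
\[
R_{12\bar 1\bar 2}=\bar R_{12\bar 1\bar 2}+\text{(SFF terms)},
\]
and you must bound the ambient isotropic piece $\bar R_{12\bar 1\bar 2}$ from \emph{below}. But the theorem imposes no lower curvature bound on $\bar M$: with only $\bar K_{max}$ available, $\bar R_{12\bar 1\bar 2}$ (for instance already $\bar R_{1313}$ when $\lambda=\mu=0$) can be arbitrarily negative. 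So the sentence ``bounding the ambient piece by $\bar K_{max}$'' cannot be made rigorous, and the argument as written only goes through when $\bar M$ is a space form—precisely the Andrews--Baker / Xu--Gu setting the present theorem is meant to generalize.

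The paper avoids this by never decoupling the two steps. It proves the refined inequality
\[
\sum_{i,j}\bar R_{ij\bar i\bar j}\le (n^2-n-2)\bar K_{max}+2\bar R_{12\bar 1\bar 2},
\]
i.e.\ all but one of the ambient curvature terms are bounded by $\bar K_{max}$ and the remaining one is left as $\bar R_{12\bar 1\bar 2}$. Inserting this into the traced Gauss equation for $n(n-1)R_0$ and then using the complex Gauss equation to trade $\bar R_{12\bar 1\bar 2}$ for $R_{12\bar 1\bar 2}$ makes the uncontrollable ambient term disappear; the SFF terms generated by that trade combine with the full $\sum\abs{\mathring h^\alpha_{i\bar j}}^2$ and close up via the elementary inequality $\sum_{i\ge 3}\abs{\mathring h^\alpha_{i\bar i}}^2\ge\frac{1}{n-2}\bigl(\mathring h^\alpha_{1\bar 1}+\mathring h^\alpha_{2\bar 2}\bigr)^2$. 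If you redo your computation without the premature bound $\sum_{i\neq j}\bar R_{ijij}\le n(n-1)\bar K_{max}$, you will find you are led exactly to this inequality, which requires its own nontrivial proof (via the Karcher-type identity for $R_{1234}$). Note also that the final topological step in the paper is not a direct strong-maximum-principle argument: one obtains weak PIC2 (hence weak PIC) with strict positivity somewhere, invokes Seshadri to get a metric of strictly positive isotropic curvature, and then combines Micallef--Moore with the Brendle--Schoen classification to conclude.
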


\begin{theorem}\label{subric1}
Suppose $M^n (n\geq4)$ is a closed and simply connected submanifold of $\bar{M}^N$ satisfying
\begin{align*}
\frac{Ric^{[2]}}{2}\ge (n-2)\bar K_{max}+\frac{n^2}{8}\abs{H}^2,
\end{align*}
with strict inequality  at some point, then $M$ is  diffeomorphic to $\Lg{S}^n$.

\end{theorem}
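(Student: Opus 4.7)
The plan is to show that $M$ has positive isotropic curvature (PIC), or more precisely that $M \times \R^2$ has PIC (i.e., PIC2), and then invoke Brendle's Ricci flow convergence theorem to deduce that $M$ is diffeomorphic to a spherical space form; since $M$ is simply connected, this pins it down to $\Lg{S}^n$.

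To test the isotropic curvature condition, I would fix an orthonormal 4-frame $\{e_1,e_2,e_3,e_4\}$ and form the complex null vectors $\zeta_1 = \tfrac{1}{\sqrt 2}(e_1 + \mi e_2)$ and $\zeta_2 = \tfrac{1}{\sqrt 2}(e_3 + \mi e_4)$. A direct expansion using the symmetries of $R$ and the first Bianchi identity gives
$$R(\zeta_1, \zeta_2, \bar\zeta_1, \bar\zeta_2) = \tfrac{1}{4}\bigl(R_{1313}+R_{1414}+R_{2323}+R_{2424}-2R_{1234}\bigr),$$
so PIC is equivalent to the positivity of this Hermitian expression. The Gauss equation for $M\hookrightarrow\bar M$ in turn gives
$$R(\zeta_1, \zeta_2, \bar\zeta_1, \bar\zeta_2) = \bar R(\zeta_1, \zeta_2, \bar\zeta_1, \bar\zeta_2) + \langle h(\zeta_1, \bar\zeta_1),\, h(\zeta_2, \bar\zeta_2)\rangle - |h(\zeta_1, \bar\zeta_2)|^2.$$
The authors' promised advantage of passing to the complex frame is exactly that the troublesome $R_{1234}$ cross term is absorbed into the Hermitian structure, collapsing the combinatorics to a comparison between one trace-like pairing and one squared norm of $h$.

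Next I would estimate each piece against the right-hand side of the hypothesis. The ambient contribution $\bar R(\zeta_1, \zeta_2, \bar\zeta_1, \bar\zeta_2)$ is bounded below by a constant multiple of $-\bar K_{max}$ via standard sectional-curvature comparison inequalities; the coefficient $(n-2)$ should emerge after summing over pairs of frames. For the extrinsic piece $\langle h(\zeta_1, \bar\zeta_1), h(\zeta_2, \bar\zeta_2)\rangle - |h(\zeta_1, \bar\zeta_2)|^2$, I would split $h$ into trace-free and mean-curvature parts, apply Cauchy-Schwarz to dominate $|h(\zeta_1, \bar\zeta_2)|^2$, and feed the remainder back into the Gauss equation for Ricci so as to re-express things in terms of $Ric(e_1,e_1)+Ric(e_2,e_2)$, which is at least $Ric^{[2]}$ for any orthonormal pair. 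The hypothesis $\tfrac{Ric^{[2]}}{2}\ge(n-2)\bar K_{max}+\tfrac{n^2}{8}|H|^2$ should then force nonnegativity of $R(\zeta_1, \zeta_2, \bar\zeta_1, \bar\zeta_2)$, with strict positivity at the distinguished point.

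The principal obstacle is sharpness: since the pinching constants are claimed to be optimal for $n=4$, each estimate must be performed with no slack, so that the factor $\tfrac{n^2}{8}$ multiplying $|H|^2$ and the factor $(n-2)$ multiplying $\bar K_{max}$ emerge exactly rather than as approximations. In particular, the Cauchy-Schwarz on $|h(\zeta_1,\bar\zeta_2)|^2$ must be balanced against a carefully chosen convex combination of the diagonal terms $h(\zeta_\alpha,\bar\zeta_\alpha)$ so that the mean-curvature contribution appears with the stated coefficient. Once PIC is established pointwise, and extended to PIC2 if required by inserting damping parameters $\lambda,\mu\in[0,1]$ into the definitions of $\zeta_1,\zeta_2$, Brendle's convergence theorem applies to the Ricci flow starting from $g$ and concludes the argument.
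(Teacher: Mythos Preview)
Your overall architecture matches the paper's: pass to the complex frame $\varepsilon_1=\frac{e_1+\sqrt{-1}\mu e_2}{\sqrt{1+\mu^2}}$, $\varepsilon_2=\frac{e_3+\sqrt{-1}\lambda e_4}{\sqrt{1+\lambda^2}}$, use the complex Gauss equation to split $R_{12\bar1\bar2}$ into an ambient piece and an extrinsic piece, bound the ambient piece above by $(n-2)\bar K_{\max}$ via the Karcher-type identity for $R_{1234}$, and feed the extrinsic piece into the complex Ricci formula so that the hypothesis on $Ric^{[2]}$ forces $R_{12\bar1\bar2}\ge 0$. Two points deserve correction.

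First, and more seriously, your endgame does not work as stated. The hypothesis is a \emph{weak} inequality with strict inequality only at one point, so the computation yields $R_{12\bar1\bar2}\ge 0$ everywhere and $>0$ somewhere---not pointwise strict positivity. Brendle's convergence theorem (for $M\times\R$ with positive isotropic curvature) therefore does not apply directly. The paper instead observes that nonnegative $R_{12\bar1\bar2}$ for all $\lambda,\mu\in[0,1]$ means $M\times\R^2$ has nonnegative isotropic curvature; then Seshadri's result supplies a metric with strictly positive isotropic curvature, Micallef--Moore gives a topological sphere, and the Brendle--Schoen \emph{classification} theorem (for $M\times\R^2$ with nonnegative isotropic curvature) rules out the K\"ahler and symmetric-space alternatives, leaving $\Lg{S}^n$.

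Second, a smaller point: the quantity you want to compare to $Ric^{[2]}$ is $Ric(\varepsilon_1,\bar\varepsilon_1)+Ric(\varepsilon_2,\bar\varepsilon_2)$, not $Ric(e_1,e_1)+Ric(e_2,e_2)$. The former is a convex combination of the four pairwise sums $Ric(e_i,e_i)+Ric(e_j,e_j)$ with $i\in\{1,2\}$, $j\in\{3,4\}$, each bounded below by $Ric^{[2]}$; this is what makes the damping parameters $\lambda,\mu$ harmless in the Ricci step. Also, the sharp constant $(n-2)$ in front of $\bar K_{\max}$ does not follow from a generic sectional-curvature comparison---the paper obtains it by an explicit computation using the identity for $12R_{1234}$ in terms of sectional curvatures of diagonal two-planes, checking that every coefficient stays nonnegative for $\lambda,\mu\in[0,1]$.
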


The pinching condition in \autoref{subric1} is optimal. In fact, when $\bar M$ is the space form $F^N(c), c>0$, Ejiri \cite{Eji79} obtained a rigidity theorem for minimal submanifolds under the pinching condition
\begin{align*}
Ric_M>(n-2)c.
\end{align*}
Xu-Gu \cite{XuGu13} obtained an extension of Ejiri's results for constant mean curvature submanifolds in the space form $F^N(c)$ under the condition
\begin{align*}
Ric_M>(n-2)\left(c+\abs{H}^2\right)>0.
\end{align*}
They also obtained a topological sphere theorem for general submanifolds in the space form $F^N(c), c\geq0$ under the same pinching condition mentioned above by using Lawson-Simons theory for stable integral currents \cite{LawSim73, Xin84}. Motivated by these facts, the authors posed the following Conjecture (c.f., \cite[Conjecture A]{XuGu13}):

\begin{conj}Let $M^n (n\geq4)$ be a closed and simply connected orientated submanifold in the space form $F^N(c)$. If $Ric_M>(n-2)\left(c+\abs{H}^2\right)>0$, then $M$ is diffeomorphic to $\Lg{S}^n$.
\end{conj}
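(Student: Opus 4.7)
The plan is to convert the extrinsic Ricci pinching into an intrinsic PIC2-type positivity (equivalently, nonnegative complex sectional curvature on $M$) and then apply Brendle's Ricci-flow convergence theorem, which forces a closed simply connected PIC2 manifold to be diffeomorphic to $\Lg{S}^n$.

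At a point $p\in M$, I would choose an orthonormal tangent frame $\{e_i\}_{i=1}^n$ and an orthonormal normal frame $\{\xi_\alpha\}$ with $\xi_1$ parallel to $H(p)$, and write $h^\alpha_{ij}=\hin{h(e_i,e_j)}{\xi_\alpha}$. Following the complex orthonormal frame technique used in the proof of \autoref{subric1}, set $\zeta_a=\tfrac{1}{\sqrt{2}}(e_{2a-1}+\mi e_{2a})$ for $a=1,2$ and expand the complex sectional curvature $R^M(\zeta_1,\zeta_2,\bar\zeta_1,\bar\zeta_2)$ via the Gauss equation. This produces a sum of an ambient $c$-contribution and a Hermitian form in the entries of $h$ whose nonnegativity is equivalent to PIC2. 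From the pinching hypothesis together with the identity
\[
Ric_{ii}^M=(n-1)c+n\hin{H}{h(e_i,e_i)}-\sum_j\abs{h(e_i,e_j)}^2,
\]
one obtains, for each $i$, the pointwise bound $\sum_j\abs{h(e_i,e_j)}^2<c+n\hin{H}{h(e_i,e_i)}-(n-2)\abs{H}^2$. Summing this over a suitable four-element subset of indices and applying Cauchy-Schwarz should dominate the Hermitian form by the $c$-contribution.

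The main obstacle is the coefficient $(n-2)$ on $\abs{H}^2$: for $n\ge5$ it is strictly smaller than the $n^2/8$ appearing in \autoref{subric1}, so the algebraic argument that handles the weaker $Ric^{[2]}$ hypothesis does not extend verbatim. Bridging the gap requires using \emph{all} $n$ Ricci eigenvalues simultaneously instead of only the two smallest, together with the Codazzi identity and an optimization over the complex direction entering PIC2. An alternative strategy would be to bypass Ricci flow and instead run Andrews-Baker mean curvature flow directly on the immersion, exploiting an equivalent pinching on the traceless second fundamental form; this extrinsic route has succeeded on closely related submanifold problems but would require independently establishing preservation of the pinching under mean curvature flow.
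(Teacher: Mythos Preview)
The statement you are attempting to prove is listed in the paper as a \emph{Conjecture} (attributed to Xu--Gu), not as a theorem; the paper does not supply a proof of it. What the paper actually proves in this direction is \autoref{subric2}, with the stronger hypothesis $\tfrac{1}{2}Ric^{[2]}\ge \tfrac{n(n-3)}{n-2}(\bar K_{max}+\abs{H}^2)$, and \autoref{subric1}, with coefficient $n^2/8$ on $\abs{H}^2$. In a space form $F^N(c)$ one has $\bar K_{max}=c$, and for $n\ge5$ both $\tfrac{n(n-3)}{n-2}$ and $n^2/8$ strictly exceed $n-2$, so the conjecture is genuinely stronger than anything established in the paper. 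There is therefore no ``paper's own proof'' to compare your proposal against.

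Your proposal is not a proof but a sketch that correctly identifies its own gap. The complex-frame/Gauss-equation computation you outline is precisely the engine behind \autoref{epsilon}, and you are right that the algebraic inequality it produces (after the Cauchy--Schwarz step $\sum_{i\ge3}\abs{\mathring h^\alpha_{i\bar i}}^2\ge \tfrac{1}{n-2}(\mathring h^\alpha_{1\bar1}+\mathring h^\alpha_{2\bar2})^2$) cannot close with the coefficient $(n-2)$ on $\abs{H}^2$ once $n\ge5$; the optimal $\delta(\varepsilon,n)$ in \autoref{epsilon} is minimized at a value strictly larger than $n-2$ (see \autoref{cor4}). Your suggestion to ``use all $n$ Ricci eigenvalues simultaneously together with Codazzi'' is a reasonable direction but is not carried out, and no mechanism is proposed for how Codazzi would enter a pointwise PIC2 estimate. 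The alternative mean-curvature-flow route you mention would require a preservation-of-pinching result that is likewise not established. As written, the proposal is an accurate diagnosis of why the conjecture is open rather than a proof of it.
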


Here is a generalization of Gu-Xu's result \cite[Theorem 4.2]{XuGu13}.

\begin{theorem}\label{subric2}
Suppose $M^n (n\geq4)$ is a closed and simply connected submanifold of $\bar{M}^N$ satisfying
\begin{align*}
\frac{Ric^{[2]}}{2} \ge\frac{n(n-3)}{n-2}\kh{\bar K_{max}+\abs{H}^2},
\end{align*}
with strict inequality  at some point, then $M$ is diffeomorphic to $\Lg{S}^n$.
\end{theorem}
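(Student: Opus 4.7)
The plan is to follow the same pattern as for \autoref{subR0} and \autoref{subric1}: use the Gauss equation together with a complex orthonormal frame (the technical device emphasized in the abstract) to convert the extrinsic pinching into positivity of the isotropic curvature on an auxiliary product $M\times\R^{k}$, and then invoke a Ricci-flow convergence theorem. The constant $\frac{n(n-3)}{n-2}$, which equals $2$ when $n=4$, strongly suggests that the natural target is positivity of $M\times\R^{2}$ in the PIC sense, equivalently that $M$ lies in the PIC2 cone of Brendle.

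At a point $p\in M$ I would fix an orthonormal frame $\set{e_{1},\dots,e_{n}}$ of $T_{p}M$ and form the complex isotropic vectors $\zeta_{1}=e_{1}+\mi e_{2}$ and $\zeta_{2}=e_{3}+\mi e_{4}$, together with the rescaled variants $e_{1}+\mi\lambda e_{2}$, $e_{3}+\mi\mu e_{4}$ for $\lambda,\mu\in\zkh{0,1}$ needed for the PIC2 criterion. The Gauss equation for $M\subset\bar M$,
\begin{align*}
R(X,Y,Z,W)=\bar R(X,Y,Z,W)+\hin{h(X,Z)}{h(Y,W)}-\hin{h(X,W)}{h(Y,Z)},
\end{align*}
evaluated at $(\zeta_{1},\zeta_{2},\bar\zeta_{1},\bar\zeta_{2})$ becomes a Hermitian expression in which the second fundamental form enters only through the two signed pieces $\hin{h(\zeta_{1},\bar\zeta_{1})}{h(\zeta_{2},\bar\zeta_{2})}$ and $-\abs{h(\zeta_{1},\bar\zeta_{2})}^{2}$. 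This is precisely the algebraic collapse alluded to in the abstract, and is what will let the final constants come out sharp.

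Next I would estimate each piece separately. The ambient part is dominated by $\bar K_{max}$ times the complex area of the isotropic plane via the standard bound $\bar R(X,Y,X,Y)\le\bar K_{max}\kh{\abs X^{2}\abs Y^{2}-\hin X Y^{2}}$, while the extrinsic part is handled by combining the Cauchy--Schwarz inequality $\hin{h(\zeta_{1},\bar\zeta_{1})}{h(\zeta_{2},\bar\zeta_{2})}\le\tfrac12\kh{\abs{h(\zeta_{1},\bar\zeta_{1})}^{2}+\abs{h(\zeta_{2},\bar\zeta_{2})}^{2}}$ with the identity $\abs H^{2}=\abs{\trace h}^{2}$ and the elementary bound $\abs{\trace h}^{2}\le n\abs h^{2}$. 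I would then re-express the intrinsic sectional curvatures that remain through
\begin{align*}
Ric(e_{i})=\sum_{k\ne i}R_{ikik},
\end{align*}
and choose the 4-frame so that $\set{e_{1},e_{3}}$ (respectively $\set{e_{2},e_{4}}$) attains the infimum defining $Ric^{[2]}$. The residual sectional curvatures $R_{ikik}$ for $k\ge5$ are controlled again by the Gauss equation, and a careful book-keeping is expected to produce exactly $\frac{n(n-3)}{n-2}$ copies of $\bar K_{max}+\abs H^{2}$, matching the hypothesis term by term.

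Under the pinching the resulting lower bound on the isotropic curvature of $M\times\R^{2}$ is pointwise non-negative and strictly positive somewhere. Invoking Brendle's Ricci-flow convergence theorem for manifolds whose product with $\R^{2}$ has PIC, the strong maximum principle pushes $M$ into the open PIC2 cone after an arbitrarily short time, the normalized flow converges to a constant-curvature metric, and simple connectedness then forces $M$ to be diffeomorphic to $\Lg{S}^{n}$. The main obstacle will be the book-keeping in the third paragraph: the constant $\frac{n(n-3)}{n-2}$ is announced to be optimal at $n=4$, so the bounds on both the residual sectional terms and the $\abs h^{2}$-versus-$\abs H^{2}$ piece must be carried out without any slack; a careless Cauchy--Schwarz or the naive estimate $\abs h^{2}\ge\abs H^{2}/n$ produces a strictly larger constant and misses the optimality.
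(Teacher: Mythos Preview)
Your overall scaffold (complex frame, Gauss equation, aim for the PIC2 cone) matches the paper's, but the heart of the argument is missing and the sketch you give for the extrinsic piece will not produce the constant $\tfrac{n(n-3)}{n-2}$. The paper does \emph{not} simply bound $\hin{h(\zeta_{1},\bar\zeta_{1})}{h(\zeta_{2},\bar\zeta_{2})}$ by Cauchy--Schwarz (that inequality in fact goes the wrong way for a lower bound on $R_{12\bar1\bar2}$). Instead it proves a one-parameter family of pinching theorems: for each $\varepsilon\in(0,1]$ one bounds the convex combination
\[
\varepsilon\cdot\frac{Ric_{1\bar1}+Ric_{2\bar2}}{2}+(1-\varepsilon)\cdot\frac{1}{n-2}\sum_{i=3}^{n}Ric_{i\bar i}
\]
from above using the complex Ricci formula \eqref{complexRicci}. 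The two Ricci blocks contribute $H^{\alpha}\bigl(\mathring h^{\alpha}_{1\bar1}+\mathring h^{\alpha}_{2\bar2}\bigr)$ with \emph{opposite} signs, and after discarding nonnegative $\abs{\mathring h}^{2}$ terms and using $\sum_{i\ge3}\mathring h^{\alpha}_{i\bar i}=-(\mathring h^{\alpha}_{1\bar1}+\mathring h^{\alpha}_{2\bar2})$, the mixed term has coefficient $\tfrac{(n-2)\varepsilon-2}{2}$. Choosing $\varepsilon=\tfrac{2}{n-2}$ kills this cross term outright; what survives is $(n-1-\varepsilon)\abs H^{2}=\tfrac{n(n-3)}{n-2}\abs H^{2}$ together with the ambient piece, which a separate calculation (based on \autoref{lem2}) bounds by $(n-1-\varepsilon)\bar K_{max}$. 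This is where the sharp constant comes from; your ``careful book-keeping'' paragraph has no mechanism to find it, and in particular never introduces the complementary Ricci sum $\sum_{i\ge3}Ric_{i\bar i}$.

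A smaller point: the endgame in the paper is not the strong maximum principle under Ricci flow. From $R_{12\bar1\bar2}\ge0$ for all $\lambda,\mu\in[0,1]$ one gets nonnegative isotropic curvature on $M\times\R^{2}$, hence on $M$, with strict positivity at a point; Seshadri then upgrades this to a PIC metric, Micallef--Moore gives a homotopy sphere, and the Brendle--Schoen classification \autoref{classif} rules out the K\"ahler and symmetric alternatives.
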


\begin{rem}The Bonnet-Myers theorem \cite{Mye41} claimed that every complete Riemannian manifold with Ricci curvature bounded from below by a positive constant is compact. For complete noncompact Riemannian manifold with quasi-positive sectional curvature, the soul theorem \cite{CheGro72, GroMey69, Per94} claimed that such manifold is diffeomorphic to the Euclidean space. Thus, one can consider the sphere theorems for complete Riemannian manifolds with similar curvature pinching conditions in the above theorems.
\end{rem}

This paper is organized as follows. In Section 2, we list
 some notations and known facts. In Section 3, we prove some intrinsic differential sphere theorems with pinched normalized scalar curvatures
 and pinched Ricci curvatures. In Section 4, we study a  Riemannian manifold immersed into another and give several new extrinsic
 topology sphere theorems and differential sphere theorems.

\vspace{2ex}
\noindent
{\bf Acknowledgement:} We would like to thank Dr. Jun Sun for useful discussions and suggestions.

\vspace{2ex}

\section{Preliminaries}
In this section, we will fix some notations and list several known facts which will be used in next two sections.

Let $\rin{M^n} {\hin{\ }{\ }}$ be a Riemannian manifold, $\nabla$ be the Levi-Civita connection related to
$\hin{\ }{\ }$ and $R$ be the Riemannian curvature tensor defined by
\begin{align*}
R(X,Y)\coloneqq[\nabla_X,\nabla_Y]-\nabla_{[X,Y]},\quad\forall X,Y\in TM.
\end{align*}
Denote
\begin{align*}
R(X,Y,Z,W)\coloneqq \hin{R(X,Y)W}{Z}.
\end{align*}
Define
$$K(X,Y)\coloneqq R(X,Y,X,Y),\quad \forall X, Y\in TM.$$
Denote $K(X,Y)$ by $K(\pi)$ if $X,Y$ are orthonormal and  $\pi=span\set{X, Y}$.
By the linearity and symmetry of $R$, it is easy to check the following identities.
\begin{lem}\label{lem1}
  For all $X, Y, Z, W \in TM$ and $a,b \in \mathbb R$, we have
\begin{align}
K\kh{X+Y, X-Y} = &4K\kh{X, Y},\notag\\
K\kh{X, Y+Z} + K\kh{X, Y-Z} = &2\kh{K\kh{X, Y} +K\kh{X, Z}},\notag\\
K\kh{aX, bY} = &a^2b^2 K\kh{X, Y},\notag\\
\label{sc4}4R(X,Y,X,Z)=&K(X,Y+Z)-K(X,Y-Z),\\
\label{sc5}24R(X,Y,Z,W)=&K(X+Z,Y+W)+K(X-Z,Y-W)+K(Y+Z,X-W)\\
&+K(Y-Z,X+W)-K(X+Z,Y-W)-K(X-Z,Y+W)\notag\\
&-K(Y+Z,X+W)-K(Y-Z,X-W).\notag
\end{align}
\end{lem}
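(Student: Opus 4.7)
The plan is to prove all five identities by direct multilinear expansion, using only the standard symmetries of the Riemann tensor: antisymmetry in the first two slots and in the last two slots, symmetry under interchange of the two pairs, and (for the last identity) the first Bianchi identity. Since $K(X,Y) = R(X,Y,X,Y)$ is separately quadratic in each argument, each identity reduces to systematic bookkeeping of cross terms, with those containing a repeated vector in an antisymmetric pair being discarded.

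The first three identities are nearly immediate. For $K(X+Y, X-Y) = 4K(X,Y)$, I would expand $R(X+Y, X-Y, X+Y, X-Y)$; the pair antisymmetries kill every summand in which $X$ or $Y$ appears twice in the same antisymmetric slot pair, and the surviving terms combine, via $R(Y,X,\cdot,\cdot) = -R(X,Y,\cdot,\cdot)$ and the analogous identity for the last pair, to $4R(X,Y,X,Y)$. For the parallelogram-type identity $K(X,Y+Z) + K(X,Y-Z) = 2(K(X,Y)+K(X,Z))$, the cross contributions $\pm 2R(X,Y,X,Z)$ cancel in the sum, leaving only the diagonal terms $2K(X,Y) + 2K(X,Z)$. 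The scaling law $K(aX, bY) = a^2 b^2 K(X,Y)$ is immediate from quadrilinearity.

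For \eqref{sc4}, I would expand $K(X,Y+Z) - K(X,Y-Z)$ directly; the diagonals $K(X,Y)$ and $K(X,Z)$ appear with the same sign in both and cancel in the difference, while the four surviving cross terms sum, using the pair-swap symmetry $R(X,Y,X,Z) = R(X,Z,X,Y)$, to $4R(X,Y,X,Z)$.

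The final polarization identity \eqref{sc5} is the real technical hurdle, and my plan is to reduce it to \eqref{sc4} plus combinatorics. Expanding each of the eight $K$-terms on the right-hand side via multilinearity produces summands of the form $\pm R(P,Q,R,S)$ with $P,Q,R,S \in \set{X,Y,Z,W}$. After discarding vanishing terms (those with $P=Q$ or $R=S$) and identifying duplicates via pair swap and sign antisymmetry, the remaining contributions split into three essential types, namely multiples of $R(X,Y,Z,W)$, $R(X,Z,Y,W)$, and $R(X,W,Y,Z)$, which are related by the first Bianchi identity $R(X,Y,Z,W) - R(X,Z,Y,W) + R(X,W,Y,Z) = 0$. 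The main obstacle is pure bookkeeping: one must tally the signed coefficient that each of the eight $\pm K$-terms contributes to every independent component and verify that, after using Bianchi to eliminate two of the three types, the sum collapses to exactly $24R(X,Y,Z,W)$. No geometric input beyond the standard curvature symmetries is needed, so the proof is complete once this combinatorial check is carried out.
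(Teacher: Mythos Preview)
Your proposal is correct and matches the paper's approach: the paper simply states that these identities follow ``by the linearity and symmetry of $R$'' and gives no further detail, so your direct multilinear expansion using the standard curvature symmetries (and Bianchi for \eqref{sc5}) is exactly what is intended. Your idea of reducing \eqref{sc5} to four applications of \eqref{sc4} is an efficient way to organize the bookkeeping.
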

Identities \eqref{sc4} and \eqref{sc5} actually were first used by Karcher \cite{Kar70} to give a short proof
of Berger's curvature tensor estimate.

Let $\kh{\bar{M}^N, \bar{g}} (N\ge n)$  be another Riemannian manifold such that there exists an isometrically immersion
$$
f: \kh{M^n, \hin{\ }{\ }} \to \kh{\bar{M}^N, \bar{g}}.
$$
When we do calculation on the submanifold, we always omit $f$ and also write $\bar{g}$ as $\hin{\ }{\ }$.
Let $\set{e_1,\cdots, e_N}$ be a local orthonormal frame on $\bar{M}$ such that
$\set{e_1,\cdots,e_n}$ form a local orthonormal frame of $M$. Let $\set{\omega^1, \cdots, \omega^n}$
be the coframe of $\set{e_1, \cdots, e_n}$.
Define $\bar{R}$ and $\bar{K}$ on $\bar{M}$ similarly as those on $M$. In what follows, without special explanation,
$i, j, k, l$ will always range from $1$ to $n$ and $\alpha, \beta, \gamma$ will always range from
$n+1$ to $N$. The second fundamental form is defined to be
$$
B=h^\alpha_{ij} \omega^i\otimes \omega^j \otimes e_\alpha.
$$
The squared norm of $B$ is $\abs{B}^2 =\sum_{i,j,\alpha}\kh{h^\alpha_{ij}}^2$.
Write $H^\alpha = \frac{1}{n}\sum_i h_{ii}^\alpha$, the mean curvature vector
 is given by $\vect{H} =H^\alpha e_\alpha$, and the (normalized) mean curvature is $H = \sqrt{\sum_\alpha \kh{H^\alpha}^2}$.

The Gauss equation can be written as
\begin{align*}
R_{ijkl} = \bar{R}_{ijkl} +\sum_{\alpha}\kh{h_{ik}^\alpha h_{jl}^\alpha - h_{il}^\alpha h_{jk}^\alpha},
\end{align*}
where $R_{ijkl}=R(e_i,e_j,e_k,e_l)$ and $\bar{R}_{ijkl}=\bar{R}(e_i,e_j,e_k,e_l)$.
In tensor language, Gauss equation also can be written as
\begin{align}\label{Gausseq}
R = \bar{R}^T +\frac{1}{2}\sum_{\alpha}h^{\alpha}\circledwedge h^{\alpha}\coloneqq \bar{R}^T +\frac12B\circledwedge B,
\end{align}
where $\bar{R}^T$ means the  restriction of $\bar{R}$ on $TM$, $\circledwedge$ denotes the
Kulkarni-Nomizu product  of two symmetric (0,2)-tensor $a$ and $b$ which defined in local coordinates by
$$
\kh{a\circledwedge b}_{ijkl} \coloneqq a_{ik}b_{jl} -a_{il}b_{jk}-a_{jk}b_{il} +a_{jl}b_{ik}.
$$

Fix $p\in M$, $X,Y\in T_pM$,  the following notations will be used throughout this paper:
\begin{align*}
  K_{min}(p) = \min_{\pi\subset T_pM} K(\pi), \quad\quad K_{max}(p) = \max_{\pi\subset T_pM} K(\pi),\\
  Ric(X,Y) =\sum_{i} R(X,e_i,Y,e_i), \quad Ric_{jj} = Ric(e_j,e_j),\quad  R_0= \frac{\sum_{i,j}R_{ijij}}{n(n-1)},\\
  \zkh{e_{i_1}, \cdots, e_{i_k}} =span\set{e_{i_1}, \cdots, e_{i_k}}, \quad\quad \forall 1\le i_1<i_2<\cdots <i_k\le n,\\
  Ric^{[k]}\zkh{e_{i_1}, \cdots, e_{i_k}} = \sum_{j=1}^k Ric_{i_ji_j},\quad\quad
  Ric^{[k]}_{min}(p)= \min_{\zkh{e_{i_1}, \cdots, e_{i_k}} \subset T_pM} Ric^{[k]}\zkh{e_{i_1}, \cdots, e_{i_k}} (p),
\end{align*}
where $Ric^{[k]}\zkh{e_{i_1}, \cdots, e_{i_k}}$ is called $k${\it-th weak Ricci curvature} of $\zkh{e_{i_1}, \cdots, e_{i_k}}$
which was first introduced by Gu-Xu in \cite{GuXu12}. One can also give similar notations as above on $\bar{M}$.
Since all our calculations is local (at $p$), we will always omit the letter ``$p$" in what follows.

 Complexify $TM$ to $T^{\mathbb C}M$ and assume $\varepsilon_1, \cdots, \varepsilon_n$ is a local orthonormal frame
 of $T^{\mathbb C}M$. Extend $R, \bar{R}, B$ and $\hin{\ }{\ }$ $\mathbb C$-linearly and denote by
\begin{align*}
h^\alpha_{i\,\bar{j}} = \hin{B(\varepsilon_i, \bar{\varepsilon}_j)}{e_\alpha}, \quad
  R_{ij\,\bar{i}\bar{j}} = R(\varepsilon_i, \varepsilon_j, \bar{\varepsilon}_i, \bar{\varepsilon}_j),
  \quad Ric_{i\,\bar{i}} = \sum_{j=1}^n R_{ij\,\bar{i}\bar{j}}.
\end{align*}
It is easy to check
\begin{align*}
  h^\alpha_{i\,\bar{i}}\in \mathbb R, \quad h^\alpha_{i\,\bar{j}}=\overline{h^\alpha_{\bar{i}j}}, \quad R_{ij\,\bar{i}\bar{j}}\in \mathbb R,
  \quad \sum_{i,j=1}^n R_{ij\,\bar{i}\bar{j}} = n(n-1)R_0.
\end{align*}
A direct computation via  the complex linearity gives the following complex Gauss equation, for $i\neq j$,
\begin{align}\label{complexGauss}
R_{ij\,\bar i\bar j}=&\bar{R}_{ij\,\bar i\bar j}+\sum_\alpha\kh{h^{\alpha}_{i\,\bar i}h^{\alpha}_{j\,\bar j}-h^{\alpha}_{i\,\bar j}h^{\alpha}_{\bar ij}}\\
=&\bar{R}_{ij\,\bar i\bar j}+\abs{H}^2+\sum_\alpha\kh{H^{\alpha}\left(\mathring{h}^{\alpha}_{i\,\bar i}
+\mathring{h}^{\alpha}_{j\,\bar j}\right)+\mathring{h}^{\alpha}_{i\,\bar i}\mathring{h}^{\alpha}_{j\,\bar j}
-\abs{\mathring{h}^{\alpha}_{i\,\bar j}}^2},\notag
\end{align}
where $\mathring{h}^\alpha_{i\bar{j}}= h^\alpha_{i\bar{j}}-H^\alpha \delta_{i\bar j}$.
Therefore, the complex Ricci curvature is given by
\begin{align}\label{complexRicci}
Ric_{i\bar i}=\sum_{j=1}^n\bar{R}_{ij\,\bar{i}\bar{j}}+(n-1)\abs{H}^2+\sum_\alpha\kh{(n-2)H^{\alpha}\mathring{h}^{\alpha}_{i\,\bar{i}}
-\sum_{k=1}^n\abs{\mathring{h}^{\alpha}_{i\,\bar k}}^2}.
\end{align}

The curvature operator $\mathcal{R}:\Lambda^2TM\To\Lambda^2TM$ is defined as follows:
\begin{align*}
\hin{\mathcal{R}(X\wedge Y)}{Z\wedge W}\coloneqq R(X,Y,Z,W).
\end{align*}
A linear subspace $V\in T^{\Com}M$ is called {\it totally isotropic} if $g(v,v)=0,$ for all $v\in V$. In other words, for all $v= X+\sqrt{-1}Y\in V$,
\begin{align*}
\abs{X}^2-\abs{Y}^2=\hin{X}{Y}=0.
\end{align*}

 To each complex 2-plane $\sigma\in\Lambda^2T^{\Com}M$ the  complex sectional curvature $K(\sigma)$ is defined to be
\begin{align*}
K(\sigma)\coloneqq\dfrac{\hin{\mathcal{R}(z\wedge w)}{\bar z\wedge\bar w}}{\abs{z\wedge w}^2},
\end{align*}
where $\sigma=span_{\Com}\{z,w\}$. It is obvious that $K(\sigma)\in \R$.  $K(\sigma)$ is called {\it isotropic curvature} if $\sigma$ is totally isotropic.
The concept of isotropic curvature was first introduced by Micallef and Moore \cite{MicWan93}.

It is easy to check that, for every totally isotropic $2$-plane, there exists an orthonormal four-frame $\set{e_1, e_2, e_3, e_4}$, such that
\begin{align*}
\sigma=span_{\Com}\set{e_1+\sqrt{-1}e_2,e_3+\sqrt{-1}e_4}.
\end{align*}
Moreover, by $\mathbb C$-linearity of $\mathcal{R}$ and $\hin{\ }{\ }$, we have
\begin{align*}
&4K(\sigma)\\
=&\hin{\mathcal{R}\left((e_1+\sqrt{-1}e_2)\wedge(e_3+\sqrt{-1}e_4)\right)}{(e_1-\sqrt{-1}e_2)\wedge(e_3-\sqrt{-1}e_4)}\\
=&\hin{\mathcal{R}\left(e_1\wedge e_3-e_2\wedge e_4+\sqrt{-1}(e_1\wedge e_4+e_2\wedge e_3)\right)}{e_1\wedge e_3-e_2\wedge e_4-\sqrt{-1}(e_1\wedge e_4+e_2\wedge e_3)}\\
=&\hin{\mathcal{R}(e_1\wedge e_3-e_2\wedge e_4)}{e_1\wedge e_3-e_2\wedge e_4}+\hin{\mathcal{R}(e_1\wedge e_4+e_2\wedge e_3)}{e_1\wedge e_4+e_2\wedge e_3}\\
=&R_{1313}+R_{2424}-2R_{1324}+R_{1414}+R_{2323}+2R_{1423}\\
=&R_{1313}+R_{1414}+R_{2323}+R_{2424}-2R_{1234},
\end{align*}
where we have used Bianchi identity in the last equality.
  When $M$ has positive isotropic curvature, Micallef and Moore proved the following theorem.
\begin{known}[Micallef-Moore \cite{MicMoo88}]\label{toposphere}
  Let $M$ be a closed $n(\ge 4)$-dimensional
Riemannian manifold. Assume for every  orthonormal four-frame $\{e_1, e_2, e_3, e_4\}$, the following inequality holds
\begin{align*}
R_{1313}+R_{1414}+R_{2323}+R_{2424}-2R_{1234}>0.
\end{align*}
 Then
$\pi_k(M)=0$ for  $2 \le k \le \zkh{\frac{n}{2}}$.
In particular,
if $M$ is simply connected, then $M$ is homeomorphic to a sphere.
\end{known}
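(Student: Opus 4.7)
The plan is to argue by contradiction: assume $\pi_k(M) \neq 0$ for some $2 \le k \le [n/2]$ and produce a geometric obstruction from the pointwise positivity of the expression $R_{1313}+R_{1414}+R_{2323}+R_{2424}-2R_{1234}$, which is (four times) the isotropic curvature. The topological conclusion in the simply connected case will then follow from standard obstruction theory: if $\pi_k(M)=0$ for $1\le k\le[n/2]$, then Poincar\'e duality together with the Hurewicz theorem forces $H_*(M)=H_*(\Lg{S}^n)$ and a map $\Lg{S}^n\to M$ inducing an isomorphism on top homology is a homotopy equivalence, so the generalized Poincar\'e conjecture yields a homeomorphism to $\Lg{S}^n$.

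To produce the harmonic map, I would invoke the $\alpha$-energy regularization of Sacks-Uhlenbeck combined with a minimax scheme on $\pi_k$: nontriviality of $\pi_k(M)$ yields, for each $\alpha>1$ slightly larger than $1$, an $\alpha$-harmonic map from $\Lg{S}^2$ realizing a minimax value, with uniformly bounded Morse index at most $k-2$. Letting $\alpha\to 1$, bubbling analysis (no energy loss issues because $M$ is closed) produces at least one nonconstant harmonic $2$-sphere $f\colon \Lg{S}^2\to M$ whose weak Morse index satisfies $\mathrm{index}(f)\le k-2$.

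The heart of the proof is a dimension count against this index bound. One extends the Jacobi operator $J$ to complex sections of $f^*TM\otimes\Com$ and endows this bundle with its canonical holomorphic structure: writing $\nabla^{0,1}$ for the $(0,1)$-part of the pulled-back Levi-Civita connection, the harmonicity of $f$ makes $\bar\partial\coloneqq\nabla^{0,1}$ a $\bar\partial$-operator and $f^*TM\otimes\Com$ becomes a holomorphic bundle on $\Lg{S}^2=\Com\Lg{P}^1$. By Grothendieck's theorem it splits as $\bigoplus_{i=1}^n L_i$ with $L_i=\sheaf{O}(a_i)$ and $\sum a_i=0$ (the bundle is topologically trivial over $\Lg{S}^2$ when $M$ is orientable). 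For any holomorphic section $s$ of such a bundle, a direct computation shows that the complex second variation equals
\begin{equation*}
\int_{\Lg{S}^2}\bigl(\abs{\nabla^{0,1}s}^2-K\bigl(\partial f\wedge s\bigr)\bigr),
\end{equation*}
where $K(\cdot)$ is the complex sectional curvature of the (possibly degenerate) $2$-plane spanned by $\partial f$ and $s$; and if in addition $s$ is \emph{isotropic}, i.e.\ $\hin{s}{s}=0$ pointwise, then $\partial f\wedge s$ spans a totally isotropic complex $2$-plane wherever it is nonzero, so the integrand is strictly negative by the hypothesis.

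The final step is therefore to find more than $k-2$ linearly independent holomorphic isotropic sections of $f^*TM\otimes\Com$. Working with the Grothendieck splitting, one can choose the $L_i$ in dual pairs relative to the complex bilinear form $\hin{\cdot}{\cdot}$; summands of nonnegative degree already contribute many holomorphic sections, and a linear algebra argument (intersecting the space of global holomorphic sections with the isotropic cone, using that the isotropic cone has complex codimension one) produces at least $\bigl[\tfrac{n}{2}\bigr]-1\ge k-1$ independent isotropic holomorphic sections. Each gives a direction of strict index decrease, contradicting $\mathrm{index}(f)\le k-2$. The main obstacle I anticipate is this last combinatorial/linear-algebra count: making sure the isotropy constraint is compatible with enough holomorphic sections, which is where the precise restriction $k\le[n/2]$ (rather than $k\le n-2$) enters.
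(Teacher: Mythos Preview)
The paper does not prove this statement at all: it is stated as \autoref{toposphere} (Theorem~A), attributed to Micallef--Moore \cite{MicMoo88}, and used as a black box throughout the paper. There is nothing in the paper to compare your argument against.

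That said, your outline is essentially the original Micallef--Moore proof, and the architecture is correct: minimax over a nontrivial $\pi_k$ using the Sacks--Uhlenbeck perturbation to produce a harmonic $2$-sphere of index at most $k-2$; the complexified second variation identity showing that holomorphic isotropic sections give strictly negative directions under positive isotropic curvature; and a Grothendieck-splitting count to manufacture enough such sections. Two points are worth tightening. First, in your index-form display the term $\abs{\nabla^{0,1}s}^2$ vanishes for holomorphic $s$, so the negativity comes entirely from the curvature term; you should also note that $\partial f$ is itself isotropic because any harmonic map from $\Lg{S}^2$ is weakly conformal, which is what makes $\partial f\wedge s$ genuinely a totally isotropic $2$-plane. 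Second, the phrase ``intersecting the space of global holomorphic sections with the isotropic cone, using that the isotropic cone has complex codimension one'' is too loose: the isotropic condition is quadratic, not linear, so you do not get a linear subspace this way. The actual mechanism (which you hint at with ``dual pairs'') is to use nondegeneracy of the complex bilinear form to pair the Grothendieck summands and extract an isotropic holomorphic \emph{subbundle} of rank at least $[n/2]-1$, orthogonal to $\partial f$; its global holomorphic sections then do the job. With those two clarifications your sketch matches \cite{MicMoo88}.
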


When $M\times\R$ has nonnegative isotropic curvature, i.e., (c.f. \cite{Bre08})
\begin{align}\label{diffineq}
R_{1313} + \lambda^2 R_{1414} + R_{2323} + \lambda^2 R_{2424} - 2\lambda R_{1234}>0
\end{align}
for all orthonormal four-frames $\set{e_1,e_2,e_3,e_4}$ and all $\lambda\in [-1, 1]$,   we have the following differential sphere theorem.
\begin{known}[Brendle \cite{Bre08}]\label{diffsphere2} Let $(M,g_0)$ be a
closed Riemannian manifold of dimension
$n\ge 4$ such that $M\times\R$ has positive isotropic curvature.
 Then the
normalized Ricci flow with initial metric $g_0$ exists for all time and converges
to a constant curvature metric as $t\to \infty$.
\end{known}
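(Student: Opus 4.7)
The plan is to follow the Hamilton--Brendle Ricci flow strategy. The condition that $M\times\R$ has positive isotropic curvature can be rephrased as a condition on the curvature operator of $M$ alone: for every orthonormal four-frame $\{e_1,e_2,e_3,e_4\}\subset T_pM$ and every $\lambda\in[-1,1]$, one requires
\[
R_{1313}+\lambda^2R_{1414}+R_{2323}+\lambda^2R_{2424}-2\lambda R_{1234}>0,
\]
which is exactly the positive isotropic curvature condition applied to orthonormal four-frames in $T_p(M\times\R)$ of the form $\{e_1,\sqrt{1-\lambda^2}\,e_3+\lambda\,\partial_t,e_2,e_4\}$ (up to rearrangement). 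So first I would verify this equivalence explicitly.

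Next, I would run the unnormalized Ricci flow $\partial_t g=-2\Ric$, which is short-time existent by Hamilton. The heart of the argument is to show that the closed convex cone $\mathcal{C}\subset\mathrm{Sym}^2_b(\Lambda^2T_pM)$ cut out by the above inequalities is preserved under the flow. By Hamilton's maximum principle for systems, this reduces to an algebraic statement about the Hamilton ODE $\tfrac{d}{dt}R=Q(R)\coloneqq R^2+R^{\#}$: if $R\in\partial\mathcal{C}$ with equality achieved along some $(e_1,\ldots,e_4,\lambda)$, then
\[
Q(R)_{1313}+\lambda^2Q(R)_{1414}+Q(R)_{2323}+\lambda^2Q(R)_{2424}-2\lambda Q(R)_{1234}\ge 0.
\]
This is the main obstacle. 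I would attack it by expanding $R^2+R^{\#}$ in an orthonormal basis, exploiting the first- and second-order variational conditions at the boundary (vary the four-frame within $O(n)$ and vary $\lambda\in[-1,1]$), and then combining terms using Bianchi's identity. Brendle's key insight is that the product structure on $M\times\R$ makes the parameter $\lambda$ crucial: the first variation in $\lambda$ forces $R_{1234}$ to equal a specific combination, and once this is plugged back in, the remaining expression is a sum of squares coming from the PIC condition applied to derived orthonormal four-frames.

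Granted invariance of $\mathcal{C}$, positivity is preserved and in fact strict by a further maximum principle argument (if the infimum of the relevant quantity stayed zero, a strong maximum principle would force it to vanish identically, contradicting compactness of $M$). To obtain convergence of the \emph{normalized} flow, I would construct a one-parameter family of pinching cones $\mathcal{C}(a,b)$ in the spirit of B\"ohm--Wilking, each invariant under $Q$ and interpolating between $\mathcal{C}$ and the ray of constant-curvature operators. Choosing $(a,b)$ adapted to the initial metric yields an invariant pinching set that forces the traceless Weyl and traceless Ricci parts to decay, so that Hamilton's compactness theorem produces a smooth limit metric of constant sectional curvature. Finally, finite-time singularities are ruled out by the same pinching improvement, which bounds the full curvature tensor by the scalar curvature and then uses standard blow-up analysis.
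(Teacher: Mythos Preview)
The paper does not prove this statement at all: it is listed as \autoref{diffsphere2} in the Preliminaries section, attributed to Brendle \cite{Bre08}, and is used throughout as a black box. So there is no ``paper's own proof'' to compare your proposal against; the authors simply quote the result and apply it by verifying the inequality \eqref{diffineq} under their various pinching hypotheses.

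That said, your outline is a reasonable high-level sketch of Brendle's actual argument in \cite{Bre08}. A few remarks on accuracy: the reduction to the Hamilton ODE via the tensor maximum principle is correct, but the crucial cone-invariance step is substantially more delicate than ``the remaining expression is a sum of squares.'' Brendle's computation of $Q(R)_{1313}+\lambda^2Q(R)_{1414}+Q(R)_{2323}+\lambda^2Q(R)_{2424}-2\lambda Q(R)_{1234}$ involves a careful case analysis (interior $\lambda$ versus $\lambda=\pm1$), heavy use of both first- and second-variation identities in the frame variables, and a nontrivial rearrangement of the $R^{\#}$ terms that is not a transparent sum of squares. Your last sentence about ruling out finite-time singularities separately is also slightly off: in the Hamilton--B\"ohm--Wilking--Brendle framework, once a pinching set squeezing toward constant curvature is in hand, convergence of the normalized flow follows directly from Hamilton's convergence criterion, without an independent singularity analysis. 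These are refinements rather than fatal gaps, but if you were writing this up in full you would need to confront them.
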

\begin{rem}
 \autoref{diffsphere2} is also true if one can verify inequality \eqref{diffineq} for $\lambda\in[0,1]$.
 Actually, if inequality \eqref{diffineq} holds for $\lambda\in[0,1]$, then for $\mu\in[-1,0]$,
consider orthonormal four-frame $\set{ e_1, e_2, e_3, -e_4}$, we have
\begin{align*}
&R_{1313} + \mu^2 R_{1414} + R_{2323} + \mu^2 R_{2424} - 2\mu R_{1234}\\
=&R_{1313} + \mu^2 R_{1414} + R_{2323} + \mu^2 R_{2424} -2(-\mu) R(e_1,e_2,e_3,-e_4)>0.
\end{align*}
\end{rem}

Seshadri \cite{Ses09} studied  the classification of closed Riemannian manifolds with nonnegative isotropic curvature. When $M\times\R^2$ has nonnegative isotropic curvature, i.e.,  (c.f. \cite{BreSch09})
\begin{align}\label{eq:classif}
R_{1313} + \lambda^2 R_{1414}
+ \mu^2 R_{2323} + \lambda^2\mu^2 R_{2424}-2\lambda\mu R_{1234}\ge 0,
\end{align}
for all points $p\in M$, all orthonormal four-frames $\set{e_1, e_2, e_3, e_4}\subset T_pM,$ and all $\lambda,\mu\in[-1,1]$, or equivalently $M$ has nonnegative complex sectional curvature (c.f. \cite[Remark 3.3]{MicWan93} or \cite[Proposition 17.8]{Bre10a}), we have the following classification theorem.

\begin{known}[Brendle-Schoen \cite{BreSch08}]\label{classif}
  Let $M$ be a closed, locally irreducible Riemannian manifold of dimension $n\ge 4$.
  If $M \times \mathbb R^2$ has nonnegative isotropic curvature, then one of the following
statements holds:
\begin{enumerate}[(i)]
\item $M$ is diffeomorphic to a spherical space form;\\
\item $n=2m$ and the universal cover of $M$ is a K\"{a}hler manifold biholomorphic to
$\Com P^m$;\\
\item the universal cover of $M$ is isometric to a compact symmetric space.
\end{enumerate}
\end{known}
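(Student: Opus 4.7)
The natural approach is through Hamilton's Ricci flow, following the strategy of \cite{BreSch09}. The hypothesis that $M\times \R^2$ has nonnegative isotropic curvature is equivalent to $M$ having nonnegative complex sectional curvature (\cite[Prop.~17.8]{Bre10a}), so I would work with the cone
\begin{align*}
\mathcal{C} := \set{R\in\sym^2(\Lambda^2\R^n) : K_{\Com}(R)\ge 0},
\end{align*}
which is closed, convex and $O(n)$-invariant. The first task is to show that $\mathcal{C}$ is preserved by the Hamilton ODE $\frac{dR}{dt}=R^2+R^{\#}$. At $R\in\partial\mathcal{C}$, pick a totally isotropic $2$-plane $\sigma$ on which $K_{\Com}(R)$ attains its minimum value $0$; the first- and second-order variational conditions obtained by varying $\sigma$ inside the isotropic Grassmannian reorganize $(R^2+R^{\#})(\sigma,\bar\sigma)\ge 0$ via an algebraic identity of Berger--Kulkarni type, essentially the same computation by which Brendle established invariance of the corresponding isotropic-curvature cones in \cite{Bre08,BreSch09}. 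Hamilton's maximum principle for tensor systems then upgrades this to preservation of $\mathcal{C}$ by the Ricci flow PDE.

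Next I would run the Ricci flow from $g_0$ on a short interval $[0,\varepsilon)$. The strong maximum principle of Hamilton, applied to the PDE for the curvature operator, forces exactly one of two alternatives. Case (A): for every $t>0$ the complex sectional curvature of $g(t)$ is strictly positive on all of $M$. Case (B): for every $t\in(0,\varepsilon)$ the set
\begin{align*}
\mathcal{Z}_t(p) := \set{\sigma\in G^{\mathrm{iso}}_{\Com}(2,T_pM) : K_{\Com}(g(t))(\sigma)=0}
\end{align*}
is nonempty at every $p\in M$, and the collection $\bigcup_p\mathcal{Z}_t(p)$ is invariant under parallel transport with respect to $g(t)$.

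In case (A), $M(t)$ has positive complex sectional curvature, so in particular $M(t)\times\R$ has positive isotropic curvature; then \autoref{diffsphere2} applies and the normalized Ricci flow converges to a metric of constant positive sectional curvature. Hence $M$ is diffeomorphic to a spherical space form and alternative (i) holds.

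Case (B) is the technical heart. The parallel null-locus $\mathcal{Z}_t$ constrains the holonomy of $(M,g(t))$, and because $M$ is locally irreducible, de Rham's theorem forces the restricted holonomy $\mathrm{Hol}^0$ to act irreducibly on the tangent space. I would then go through Berger's classification. Generic holonomy $\mathrm{SO}(n)$ admits no nontrivial parallel isotropic sub-structure, which would make $\mathcal{Z}_t$ empty and contradict (B). If $\mathrm{Hol}^0\subset U(m)$ then $M$ is Kähler; the Kähler-type isotropic planes $\mathrm{span}_{\Com}\set{X-\mi JX,\,Y-\mi JY}$ sit inside $\mathcal{Z}_t$, and the residual nonnegativity translates to nonnegative holomorphic bisectional curvature, whereupon Mok's uniformization theorem identifies the universal cover with $\Com P^m$, yielding (ii). For the remaining non-generic Berger entries --- quaternionic-Kähler, $\mathrm{Sp}(k)$, $G_2$, $\mathrm{Spin}(7)$, and the symmetric holonomies --- a representation-theoretic analysis of the $\mathrm{Hol}^0$-invariant subspaces of $\sym^2(\Lambda^2)$ combined with the constraints $\mathcal{Z}_t$ imposes on $R$ forces $\nabla R\equiv 0$; hence $g(t)$ is locally symmetric and alternative (iii) follows by de Rham plus compactness. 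The main obstacle is precisely this case-by-case reconciliation of Berger's list with the boundary stratification of $\mathcal{C}$, the Kähler sub-case (via Mok's theorem) carrying the most weight.
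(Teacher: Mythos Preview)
The paper does not give a proof of this statement. It is listed as \autoref{classif} inside a \texttt{known} environment in Section~2 (Preliminaries) and is simply quoted from Brendle--Schoen \cite{BreSch08} as a black box, to be invoked later in the proofs of \autoref{subR0} and \autoref{epsilon}. So there is no ``paper's own proof'' to compare against.

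That said, your outline is a faithful sketch of the original Brendle--Schoen argument: invariance of the cone $\{K_{\Com}\ge 0\}$ under the Hamilton ODE (established in \cite{BreSch09}), the strong maximum principle producing either strict positivity (whence \autoref{diffsphere2} gives a space form) or a parallel null set, and in the latter case the Berger holonomy classification together with Mok's theorem in the K\"ahler subcase. One small correction: in the positive branch you should invoke the convergence result for $M\times\R^2$ with positive isotropic curvature (i.e., positive complex sectional curvature), which is exactly the setting of \cite{BreSch09}, rather than dropping to $M\times\R$ and \autoref{diffsphere2}; the two conditions are not equivalent and only the stronger one is guaranteed here. Also, the hyperk\"ahler case $\mathrm{Sp}(k)$ is Ricci-flat, so on a closed manifold with $K_{\Com}\ge 0$ it forces flatness and hence local reducibility, which should be noted separately rather than lumped into the ``$\nabla R\equiv 0$'' conclusion.
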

\begin{rem}
Similar to the remark after \autoref{diffsphere2}, this classification theorem is true if we can verify the condition \eqref{eq:classif} for all four-frame $\set{e_1,e_2,e_3,e_4}$ and all $\lambda,\mu\in[0,1]$.
\end{rem}

\vspace{2ex}

\section{Sphere theorems for pinched curvatures}
In this section, we will prove  the intrinsic sphere theorems listed in the introduction.
Before we prove these theorems, we give a useful lemma.
\begin{lem}\label{lem2}
Let $\{e_1, e_2, e_3, e_4\}$ be any orthonormal four-frame, then we have
\begin{align*}
12R_{1234} = &4\sum_{1\le i<j \le 4} R_{ijij} -2\kh{R_{1313} +R_{1414} +R_{2323}+R_{2424}}\\
&-\kh{K(e_1+e_3, e_2-e_4) + K(e_1-e_3, e_2+e_4) + K(e_2+e_3, e_1+e_4) +K(e_2-e_3, e_1-e_4)}.
\end{align*}

\end{lem}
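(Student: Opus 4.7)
The plan is to derive the identity by combining two direct applications of \autoref{lem1}. First I would substitute $X=e_1,\,Y=e_2,\,Z=e_3,\,W=e_4$ into identity \eqref{sc5}, which expresses $24R_{1234}$ as a signed sum of eight sectional curvatures of the form $K(e_i\pm e_k,\,e_j\pm e_l)$. Crucially, the four terms that enter \eqref{sc5} with a minus sign are precisely
\begin{align*}
S_{-}\coloneqq K(e_1+e_3,e_2-e_4)+K(e_1-e_3,e_2+e_4)+K(e_2+e_3,e_1+e_4)+K(e_2-e_3,e_1-e_4),
\end{align*}
i.e., the quantity already appearing on the right-hand side of the lemma. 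Denoting the remaining four ``positive'' terms by $S_{+}$, the identity \eqref{sc5} becomes simply $24R_{1234}=S_{+}-S_{-}$.

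Next, I would compute the companion sum $S_{+}+S_{-}$ purely in terms of the original sectional curvatures $R_{ijij}$ by iterating the parallelogram identity $K(X,Y+Z)+K(X,Y-Z)=2(K(X,Y)+K(X,Z))$ of \autoref{lem1}. Grouping the eight terms according to the pattern of their first slot ($e_1\pm e_3$ in four of them, $e_2\pm e_3$ in the other four), one application of the parallelogram law splits off $e_4$ from each $e_2\pm e_4$ (resp.\ $e_1\pm e_4$), and a second application splits off $e_3$ from each $e_1\pm e_3$ (resp.\ $e_2\pm e_3$). The bookkeeping should yield
\begin{align*}
S_{+}+S_{-}&=4\bigl(R_{1212}+R_{2323}+R_{1414}+R_{3434}\bigr)+4\bigl(R_{1212}+R_{1313}+R_{2424}+R_{3434}\bigr)\\
&=8\sum_{1\le i<j\le 4}R_{ijij}-4\bigl(R_{1313}+R_{1414}+R_{2323}+R_{2424}\bigr).
\end{align*}

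Finally, subtracting the identity $24R_{1234}=S_{+}-S_{-}$ from the formula for $S_{+}+S_{-}$ eliminates $S_{+}$ and gives
\begin{align*}
2S_{-}=8\sum_{1\le i<j\le 4}R_{ijij}-4\bigl(R_{1313}+R_{1414}+R_{2323}+R_{2424}\bigr)-24R_{1234},
\end{align*}
which, after division by $2$ and rearrangement, is exactly the stated identity. No use of the Bianchi identity or curvature symmetries beyond those implicit in $K(\cdot,\cdot)$ is required. The only real obstacle is the combinatorial bookkeeping in the middle step; I would organize it by expanding the eight terms in a table indexed by the sign pattern so that the pairs canceled by each application of the parallelogram law are visually obvious, which makes the symmetry between the $e_1\pm e_3$ and $e_2\pm e_3$ groups transparent and rules out sign errors.
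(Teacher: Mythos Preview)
Your argument is correct. Both your proof and the paper's start from identity \eqref{sc5} with $X=e_1,\,Y=e_2,\,Z=e_3,\,W=e_4$, and both reduce the task to expressing the ``positive'' block $S_{+}$ in terms of the $R_{ijij}$ and $S_{-}$. The organizational difference is in how that reduction is carried out. You compute $S_{+}+S_{-}$ directly by two passes of the parallelogram identity $K(X,Y+Z)+K(X,Y-Z)=2\bigl(K(X,Y)+K(X,Z)\bigr)$, then subtract $S_{+}-S_{-}=24R_{1234}$. The paper instead observes that $\set{\tfrac{e_1\pm e_3}{\sqrt2},\tfrac{e_2\pm e_4}{\sqrt2}}$ and $\set{\tfrac{e_1\pm e_4}{\sqrt2},\tfrac{e_2\pm e_3}{\sqrt2}}$ are orthonormal bases of $[e_1,e_2,e_3,e_4]$, so $4\sum_{1\le i<j\le4}R_{ijij}$ equals the sum of the six pairwise $K$-values in each rotated frame; this yields two identities \eqref{sum1}--\eqref{sum2} which, together with $K(e_i+e_j,e_i-e_j)=4R_{ijij}$, are substituted into \eqref{sc5}. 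Your route is a bit more economical since it bypasses the frame-invariance observation and uses only the parallelogram law, while the paper's version makes the appearance of $\sum_{i<j}R_{ijij}$ more conceptual. The underlying algebra is the same, and your intermediate value $S_{+}+S_{-}=8\sum R_{ijij}-4(R_{1313}+R_{1414}+R_{2323}+R_{2424})$ checks out.
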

\begin{proof}First note that
$$
\set{\frac{e_1+e_3}{\sqrt{2}}, \frac{e_1-e_3}{\sqrt{2}}, \frac{e_2+e_4}{\sqrt{2}}, \frac{e_2-e_4}{\sqrt{2}}}, \quad
\set{\frac{e_1+e_4}{\sqrt{2}}, \frac{e_1-e_4}{\sqrt{2}}, \frac{e_2+e_3}{\sqrt{2}}, \frac{e_2-e_3}{\sqrt{2}}}
$$
are two orthonormal basises of ${\rm span}\set{e_1,e_2,e_3,e_4}$.
Therefore, by \autoref{lem1}, we have
\begin{align}\label{sum1}
4\sum_{1\le i<j\le 4} R_{ijij}=&
  K(e_1+e_3, e_1-e_3) + K(e_1+e_3, e_2+e_4) +K(e_1+e_3, e_2-e_4)\\
  &+K(e_1-e_3, e_2+e_4) +K(e_1-e_3, e_2-e_4) + K(e_2+e_4, e_2-e_4).\notag\\
\label{sum2}  4\sum_{1\le i<j\le 4} R_{ijij}=& K(e_1+e_4, e_1-e_4) + K(e_1+e_4, e_2+e_3) +K(e_1+e_4, e_2-e_3)\\
  &+K(e_1-e_4, e_2+e_3) +K(e_1-e_4, e_2-e_3) + K(e_2+e_3, e_2-e_3).\notag
\end{align}
 Set $X=e_1, Y=e_2, Z=e_3, W=e_4$ in \eqref{sc5}, we have
 \begin{align*}
 &24R_{1234}\\
 =&K(e_1+e_3,e_2+e_4)+ K(e_1-e_3,e_2-e_4)+K(e_2+e_3,e_1-e_4)
+K(e_2-e_3,e_1+e_4)\\
&-K(e_1+e_3,e_2-e_4)-K(e_1-e_3,e_2+e_4)-K(e_2+e_3,e_1+e_4)-K(e_2-e_3,e_1-e_4)\\
=&K(e_1+e_3, e_1-e_3) + K(e_1+e_3, e_2+e_4) +K(e_1+e_3, e_2-e_4)\\
  &+K(e_1-e_3, e_2+e_4) +K(e_1-e_3, e_2-e_4) + K(e_2+e_4, e_2-e_4)\\
  &+K(e_1+e_4, e_1-e_4) + K(e_1+e_4, e_2+e_3) +K(e_1+e_4, e_2-e_3)\\
  &+K(e_1-e_4, e_2+e_3) +K(e_1-e_4, e_2-e_3) + K(e_2+e_3, e_2-e_3)\\
&-2\kh{K(e_1+e_3,e_2-e_4)+K(e_1-e_3,e_2+e_4)+K(e_2+e_3,e_1+e_4)+K(e_2-e_3,e_1-e_4)}\\
&-K(e_1+e_3, e_1 -e_3)-K(e_2+e_4, e_2 -e_4)-K(e_1+e_4, e_1 -e_4)-K(e_2+e_3, e_2 -e_3)\\
= &8\sum_{1\le i<j \le 4} R_{ijij} -4\kh{R_{1313} +R_{1414} +R_{2323}+R_{2424}}\\
&-2\kh{K(e_1+e_3, e_2-e_4) + K(e_1-e_3, e_2+e_4) + K(e_2+e_3, e_1+e_4) +K(e_2-e_3, e_1-e_4)}.
 \end{align*}
In the last equality, we have used \eqref{sum1} and \eqref{sum2}.
\end{proof}

The following theorem obtained by Gu-Xu-Zhao \cite{GuXuZha17} independently. We list a proof here for
reader's convenience.
\begin{theorem}\label{Kmin}
Let $M^n (n\geq4)$ be a closed and simply connected Riemannian manifold. Assume  the following pinching condition holds,
$$
 K_{min}>\left(1-\dfrac{12}{n^2-n+12}\right)R_0,
$$
then $M$ is  diffeomorphic to $\Lg{S}^n$.
\end{theorem}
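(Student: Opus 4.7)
My plan is to apply Brendle's differential sphere theorem (\autoref{diffsphere2}): it suffices to show that $M\times\R$ has positive isotropic curvature, which by the remark following \autoref{diffsphere2} amounts to verifying, for every orthonormal four-frame $\{e_1,e_2,e_3,e_4\}$ in $T_pM$ and every $\lambda\in[0,1]$, the inequality
\begin{equation*}
f(\lambda):=R_{1313}+\lambda^2 R_{1414}+R_{2323}+\lambda^2 R_{2424}-2\lambda R_{1234}>0.
\end{equation*}
I would start with the observation that the pinching coefficient $1-12/(n^2-n+12)$ lies in $(0,1)$, and since $R_0\geq K_{min}$, the hypothesis forces $K_{min}>0$, hence also $R_0>0$.

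To bring the parameter $\lambda$ into \autoref{lem2}, I would apply that lemma on $M\times\R$ to the orthonormal four-frame $\{e_1,e_2,e_3,\,\lambda e_4+\sqrt{1-\lambda^2}\,\partial_t\}$, where $\partial_t$ is a unit vector along the $\R$-factor. Because the product metric is flat in the $\partial_t$-direction, every curvature component involving $\partial_t$ vanishes, and the Lemma~\ref{lem2} identity collapses to a relation among curvatures of $M$ that rearranges to
\begin{equation*}
6 f(\lambda)=4\bigl(R_{1313}+\lambda^2 R_{1414}+R_{2323}+\lambda^2 R_{2424}\bigr)-4\bigl(R_{1212}+\lambda^2 R_{3434}\bigr)+U'(\lambda),
\end{equation*}
where
\begin{equation*}
U'(\lambda):=K(e_1+e_3,e_2-\lambda e_4)+K(e_1-e_3,e_2+\lambda e_4)+K(e_2+e_3,e_1+\lambda e_4)+K(e_2-e_3,e_1-\lambda e_4).
\end{equation*}

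Next I would bound the three pieces separately. Each summand of $U'(\lambda)$ equals $R(v,w,v,w)$ for orthogonal $v,w$ with $|v|^2=2$, $|w|^2=1+\lambda^2$, so it is at least $2(1+\lambda^2)K_{min}$, yielding $U'(\lambda)\geq 8(1+\lambda^2)K_{min}$; analogously the first bracket is at least $2(1+\lambda^2)K_{min}$. For the negative term I would complete $\{e_1,\dots,e_4\}$ to an orthonormal basis $\{e_1,\dots,e_n\}$ and exploit $\sum_{i<j}R_{ijij}=\tfrac{n(n-1)}{2}R_0$ together with $R_{ijij}\geq K_{min}$ on the four planes $\{e_1,e_3\},\{e_1,e_4\},\{e_2,e_3\},\{e_2,e_4\}$ and on the remaining $\binom{n}{2}-6$ complementary pairs, obtaining
\begin{equation*}
R_{1212}+R_{3434}\leq\frac{n(n-1)}{2}R_0-\Bigl(\frac{n(n-1)}{2}-2\Bigr)K_{min}.
\end{equation*}
Since $K_{min}>0$ gives $R_{3434}>0$ and $\lambda^2\leq 1$, the bound $\lambda^2 R_{3434}\leq R_{3434}$ is available.

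Assembling these estimates yields
\begin{equation*}
6 f(\lambda)\geq 2 K_{min}\bigl[n(n-1)+4+8\lambda^2\bigr]-2n(n-1)R_0,
\end{equation*}
whose right-hand side is increasing in $\lambda^2$ and thus minimized at $\lambda=1$, where it equals $2\bigl[(n^2-n+12)K_{min}-n(n-1)R_0\bigr]>0$ by the pinching hypothesis. Therefore $f(\lambda)>0$ for every admissible four-frame and every $\lambda\in[0,1]$, so $M\times\R$ has positive isotropic curvature; \autoref{diffsphere2} combined with simple connectivity then gives that $M$ is diffeomorphic to $\Lg{S}^n$. The main technical point is the $\lambda$-weighted version of \autoref{lem2} obtained by applying it on $M\times\R$; once that identity is in hand, the remaining work is a routine splitting of the scalar curvature, and the estimate is tight in the sense that the worst case $\lambda=1$ matches the pinching constant $1-12/(n^2-n+12)$ exactly.
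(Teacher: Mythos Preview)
Your identity obtained by applying \autoref{lem2} on $M\times\R$ to the frame $\{e_1,e_2,e_3,\lambda e_4+\sqrt{1-\lambda^2}\,\partial_t\}$ is correct, and the individual lower bounds you use for each piece are valid. The fatal error is in the final step: you claim that the right-hand side
\[
2K_{min}\bigl[n(n-1)+4+8\lambda^2\bigr]-2n(n-1)R_0
\]
is ``increasing in $\lambda^2$ and thus minimized at $\lambda=1$.'' An increasing function on $[0,1]$ is minimized at $\lambda=0$, not $\lambda=1$. At $\lambda=0$ your bound reads $2K_{min}(n^2-n+4)-2n(n-1)R_0$, which is positive only under the \emph{stronger} hypothesis $K_{min}>\bigl(1-\tfrac{4}{n^2-n+4}\bigr)R_0$. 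The paper's pinching constant $1-\tfrac{12}{n^2-n+12}$ is strictly smaller, so your argument does not cover the full range claimed.

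The loss comes from the step $\lambda^2 R_{3434}\le R_{3434}$: when $\lambda$ is small this discards almost all of $R_{3434}$, and your crude bound $U'(\lambda)\ge 8(1+\lambda^2)K_{min}$ cannot recover it. Indeed, $U'(0)=4R_{1212}+2(R_{1313}+R_{2323})$ exactly, so the term $-4R_{1212}$ in your identity is cancelled perfectly at $\lambda=0$, but your uniform lower bound on $U'$ throws this cancellation away. The paper sidesteps the whole issue by working only at $\lambda=1$: it proves $R_{1313}+R_{2323}\pm R_{1234}>0$ (via \autoref{lem2} for the frames $\{e_1,e_2,e_3,\pm e_4\}$), hence $R_{1313}+R_{2323}>|R_{1234}|$, and by symmetry $R_{1414}+R_{2424}>|R_{1234}|$. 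Then
\[
R_{1313}+R_{2323}+\lambda^2(R_{1414}+R_{2424})>(1+\lambda^2)|R_{1234}|\ge 2\lambda R_{1234}
\]
handles all $\lambda\in[0,1]$ simultaneously. Your $\lambda$-dependent identity is a nice idea, but to make it yield the sharp constant you would need $\lambda$-dependent estimates that track the cancellation in $U'(\lambda)$ more carefully.
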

\begin{proof}[Proof of Theorem \autoref{Kmin}]
By \autoref{diffsphere2}, it is sufficient to prove \eqref{diffineq} holds for every orthonormal four-frame $\set{e_1, e_2, e_3, e_4}$
and $\lambda\in [0,1]$.
By \autoref{lem2}, we have
\begin{align*}
&12\left(R_{1313}+R_{2323}+R_{1234}\right)\\
=&4\sum_{1\le i<j \le 4} R_{ijij}-2\left(R_{1414}+R_{2424}\right)+10\left(R_{1313}+R_{2323}\right)  \\
&-\kh{K(e_1+e_3,e_2-e_4)+K(e_1-e_3,e_2+e_4)}\\
&-\kh{K(e_2+e_3,e_1+e_4)+K(e_2-e_3,e_1-e_4)}\\
=&4\sum_{1\le i<j \le 4} R_{ijij}-2\left(R_{1414}+R_{2424}\right)+10\left(R_{1313}+R_{2323}\right)  \\
&-\kh{4\sum_{1\le i<j\le 4}R_{ijij} -K(e_1+e_3,e_2+e_4)-K(e_1-e_3,e_2-e_4)-4R_{1313}-4R_{2424}}\\
&-\kh{4\sum_{1\le i<j\le 4}R_{ijij} -K(e_2+e_3,e_1-e_4)-K(e_2-e_3,e_1+e_4)-4R_{2323}-4R_{1414}}\\
=&-4\sum_{1\le i<j \le 4} R_{ijij}+2\left(R_{1414}+R_{2424}\right)+14\left(R_{1313}+R_{2323}\right)  \\
&+K(e_1+e_3,e_2+e_4)+K(e_1-e_3,e_2-e_4)+K(e_2+e_3,e_1-e_4)+K(e_2-e_3,e_1+e_4),
\end{align*}
where in the second equality,  we have used \eqref{sum1} and \eqref{sum2}. Thus,
\begin{align*}
12\left(R_{1313}+R_{2323}+R_{1234}\right)\ge&-2\left(n(n-1)R_0 -2\sum_{i=1}^4\sum_{j=5}^n R_{ijij} -\sum_{5\le i,j\le n}R_{ijij}\right) +48K_{min}\\
\ge&-2\left[n(n-1)R_0 -\left(2\times 4(n-4) +(n-4)(n-5)\right)K_{min}\right]+48K_{min}\\
=& 2\kh{-n(n-1)R_0 -\left( n(n-1) +12\right)K_{min}}.
\end{align*}

Hence, if
\begin{align*}
  K_{min}>\left(1-\dfrac{12}{n^2-n+12}\right)R_0,
\end{align*}
we obtain
$$
R_{1313}+R_{2323}+R_{1234}>0.
$$
Replace $e_4$ by $-e_4$, we obtain
\begin{align*}
R_{1313}+R_{2323}-R_{1234}>0.
\end{align*}
Hence,
\begin{align*}
R_{1313}+R_{2323}-\abs{R_{1234}}>0,
\end{align*}
Similarly,
\begin{align*}
R_{1414}+R_{2424}-\abs{R_{1234}}>0.
\end{align*}
Therefore,
$$
R_{1313}+R_{2323}+\lambda^2(R_{1414}+R_{2424})> (1+\lambda^2)\abs{R_{1234}}\ge 2\lambda R_{1234}.
$$
Our conclusion follows immediately from \autoref{diffsphere2}.
\end{proof}

When the dimension $n=4$, the following example indicates that our pinching constant is optimal.
\begin{exam}\label{exam}

 Consider  the Fubini-Study metric on $\Com P^n$, then we have
\begin{align*}
R(X,Y,X,Y)=1+3\abs{\hin{JX}{Y}}^2,
\end{align*}
for every orthonormal two-frame $\{X,Y\}$, where $J$ is the complex structure.
Let $n=2m$, consider a local orthonormal frame $\{e_1,\cdots,e_m, Je_1, \cdots, Je_m\}$, we have
\begin{align*}
  R(e_i,e_j,e_i,e_j)&=1, \quad \forall 1\le i\neq j\le m,\\
  R(e_i,Je_j,e_i,Je_j)&=1, \quad \forall 1\le i\neq j\le m,\\
  R(e_i,Je_i,e_i,Je_i)&=4, \quad \forall 1\le i\le m,\\
  R(Je_i,Je_j,Je_i,Je_j)&=1, \quad \forall 1\le i\neq j\le m
\end{align*}
Therefore,
\begin{align*}
  s=4m(m-1) +8m=n(n+2),\quad  Ric_M = 2m+2 = n+2, \quad K_{min} =1,\quad K_{max} =4,\\
  R_0=\frac{s}{n(n-1)} = \frac{n+2}{n-1}=\frac{Ric_M}{n-1},\ \ K_{min}=\frac{n-1}{n+2}R_0
  =\frac{n-1}{n+2}\frac{Ric_M}{n-1}, \ \ R_0 = \frac{Ric_M}{n-1} =\frac{n+2}{4(n-1)}K_{max}.
\end{align*}
When $n=4$, we have
$$
R_0 =\frac{Ric_M}{3} =  \frac{1}{2}K_{max}, \quad K_{min} = \frac{1}{2} R_0.
$$
\end{exam}

\begin{proof}[ Proof of \autoref{Kmax}]
We begin with the following identity:
\begin{align}\label{eq:new2-1}
n(n-1)R_0=&\sum_{i, j=5}^nR_{ijij}+2\sum_{i=1}^4\sum_{j=5}^nR_{ijij}+2\sum_{1\leq i<j\leq4}R_{ijij}.
\end{align}
Notice that, for $\lambda\in[0,1]$ and $\varepsilon>0$,
\begin{align}
&\sum_{1\leq i<j\leq4}R_{ijij}\label{eq:new2-2}\\
=&\sum_{1\leq i<j\leq4}R_{ijij}
-\frac{\varepsilon}{2(1+\lambda^2)}\left(\left(R_{1313}+R_{2323}\right)
+\lambda^2\left(R_{1414}+R_{2424}\right)-2\lambda R_{1234}\right)\notag\\
&+\frac{\varepsilon}{2(1+\lambda^2)}\left(\left(R_{1313}+R_{2323}\right)
+\lambda^2\left(R_{1414}+R_{2424}\right)-2\lambda R_{1234}\right)\notag
\\
=&R_{1212}+R_{3434}+
\left(1-\dfrac{\varepsilon}{2(1+\lambda^2)}\right)\left(R_{1313}
+R_{2323}\right)+\left(1-\dfrac{\varepsilon\lambda^2}{2(1+\lambda^2)}\right)
\left(R_{1414}+R_{2424}\right)+\dfrac{\varepsilon\lambda}{1+\lambda^2}R_{1234}\notag\\
&+\dfrac{\varepsilon}{2(1+\lambda^2)}\left(\left(R_{1313}+R_{2323}\right)
+\lambda^2\left(R_{1414}+R_{2424}\right)-2\lambda R_{1234}\right) .\notag
\end{align}
According to \autoref{lem2}, replace $e_4$ by $-e_4$, we obtain
\begin{align}\label{lem2a}
12R_{1234} = &-4\kh{R_{1212}+R_{3434}}-2\kh{R_{1313} +R_{1414} +R_{2323}+R_{2424}}\\
&+\kh{K(e_1+e_3, e_2+e_4) + K(e_1-e_3, e_2-e_4) + K(e_2+e_3, e_1-e_4) +K(e_2-e_3, e_1+e_4)}.\notag
\end{align}
Therefore, for fixed $\varepsilon_0 =12(\sqrt{10}-3)$,
\begin{align}
&R_{1212}+R_{3434}+
\left(1-\dfrac{\varepsilon_0}{2(1+\lambda^2)}\right)\left(R_{1313}
+R_{2323}\right)+\left(1-\dfrac{\varepsilon_0\lambda^2}{2(1+\lambda^2)}\right)
\left(R_{1414}+R_{2424}\right)+\dfrac{\varepsilon_0\lambda}{1+\lambda^2}R_{1234}\label{eq:new2-3}\\
=&\left(1-\dfrac{\varepsilon_0\lambda}{3(1+\lambda^2)}\right)\left(R_{1212}+R_{3434}\right)
+\left(1-\dfrac{\varepsilon_0(3+\lambda)}{6\left(1+\lambda^2\right)}\right)\left(R_{1313}+R_{2323}\right)
+\left(1-\dfrac{\varepsilon_0\kh{3\lambda^2+\lambda}}{6\left(1+\lambda^2\right)}\right)\left(R_{1414}+R_{2424}\right)\notag\\
&+\dfrac{\varepsilon_0\lambda\left(K(e_1+e_3, e_2+e_4) + K(e_1-e_3, e_2-e_4) + K(e_2+e_3, e_1-e_4) +K(e_2-e_3, e_1+e_4)\right)}{12(1+\lambda^2)}\notag\\
\le&\left(1-\dfrac{\varepsilon_0\lambda}{3(1+\lambda^2)}\right)\cdot 2K_{max}
+\left(1-\dfrac{\varepsilon_0(3+\lambda)}{6\left(1+\lambda^2\right)}\right)\cdot 2K_{max}
+\left(1-\dfrac{\varepsilon_0\kh{3\lambda^2+\lambda}}{6\left(1+\lambda^2\right)}\right)\cdot 2K_{max}
+\dfrac{16\varepsilon_0\lambda K_{max}}{12(1+\lambda^2)}\notag\\
=&(6-\varepsilon_0)K_{max},\notag
\end{align}
where we have used
$$
1-\dfrac{\varepsilon_0\lambda}{3(1+\lambda^2)}\ge 0, \quad
 1-\dfrac{\varepsilon_0(3+\lambda)}{6\left(1+\lambda^2\right)}\ge 0, \quad
1-\dfrac{\varepsilon_0\kh{3\lambda^2+\lambda}}{6\left(1+\lambda^2\right)}\ge 0,\quad \forall \lambda\in[0,1].
$$
Thus, \eqref{eq:new2-1}, \eqref{eq:new2-2} and \eqref{eq:new2-3} yield
\begin{align}
&\dfrac{\varepsilon_0}{1+\lambda^2}\left(\left(R_{1313}+R_{2323}\right)
+\lambda^2\left(R_{1414}+R_{2424}\right)-2\lambda R_{1234}\right)\label{eq:new2-4}\\
\geq& n(n-1)R_0 -(12-2\varepsilon_0)K_{max}-\kh{\sum_{i, j=5}^nR_{ijij}+2\sum_{i=1}^4\sum_{j=5}^nR_{ijij}}\notag\\
\ge& n(n-1)R_0 -(12-2\varepsilon_0)K_{max} -(n-4)(n-5)K_{max} - 8(n-4)K_{max}\notag\\
=& n(n-1)R_0 -(n^2-n-2\varepsilon_0)K_{max}.\notag
\end{align}
Consequently, the assumption $R_0 > \kh{1-\frac{2\varepsilon_0}{n(n-1)}}K_{max}$  combined with \eqref{eq:new2-4} imply
 $$
 \left(R_{1313}+R_{2323}\right)
+\lambda^2\left(R_{1414}+R_{2424}\right)-2\lambda R_{1234}>0.
 $$
Our conclusion follows from \autoref{diffsphere2} immediately.
\end{proof}
\begin{rem}\label{rem1}
  If we take $\lambda=1, \varepsilon_0=3$ in the above proof, we actually have, when
  \begin{align*}
    R_0 > \kh{1 -\frac{6}{n(n-1)}}K_{max},
  \end{align*}
  then the isotropic curvature
  \begin{align*}
    R_{1313}+R_{1414}+R_{2323}+R_{2424}>0,
  \end{align*}
  which implies $M$ is homeomorphic to a sphere.
  One can see from \autoref{exam}, the  pinching constant $\kh{1-\dfrac{6}{n(n-1)}}$
  is optimal when $n=4$.
\end{rem}

\begin{proof}[Proof of \autoref{2thRic}]
Let $D$ be a constant satisfying $Ric^{[4]}>4(n-1)D$. Then
\begin{equation}\label{eq:new3-1}
4(n-1)D<Ric_{11}+Ric_{22}+Ric_{33}+Ric_{44}=\sum_{i=1}^4\sum_{j=5}^nR_{ijij}+2\sum_{1\leq i<j\leq 4}R_{ijij}.
\end{equation}
Check the proof of \autoref{Kmax}, we actually have proved that for every $\lambda\in[0,1]$,
\begin{align*}
\sum_{1\leq i<j\leq 4}R_{ijij}\leq (6-\varepsilon_0)K_{max}+\dfrac{\varepsilon_0}{2\left(1+\lambda^2\right)}\left(\left(R_{1313}+R_{2323}\right)
+\lambda^2\left(R_{1414}+R_{2424}\right)-2\lambda R_{1234}\right),
\end{align*}
where $\varepsilon_0=12(\sqrt{10}-3)$.
Combined with \eqref{eq:new3-1}, we obtain
\begin{align*}
4(n-1)D<\left(4(n-1)-2\varepsilon_0\right)K_{max}+\dfrac{3}{1+\lambda^2}\left(\left(R_{1313}+R_{2323}\right)+\lambda^2\left(R_{1414}+R_{2424}\right)-2\lambda R_{1234}\right).
\end{align*}
Hence, if $$
\frac{Ric^{[4]}}{4(n-1)}>\kh{1-\frac{\varepsilon_0}{2(n-1)}}K_{max},
$$
 we have
\begin{align*}
\left(R_{1313}+R_{2323}\right)+\lambda^2\left(R_{1414}+R_{2424}\right)-2\lambda R_{1234}>0.
\end{align*}
We complete our proof.
\end{proof}
\begin{rem}\label{rem2}
Similar as \autoref{rem1}, one can take $\lambda=1$ and $\varepsilon_0 =3$ and obtain that, if
$$
\frac{Ric^{[4]}}{4(n-1)}>\kh{1-\frac{3}{2(n-1)}}K_{max},
$$
then $M$ has positive isotropic curvature, and is homeomorphic to a sphere.
\end{rem}

Moreover, if $M$ is Einstein, we obtain the following
\begin{cor}\label{Einstein}
   Let $M^n (n\ge 4)$ be a closed and simply connected Einstein manifold.  If
$$
R_0>\kh{1-\frac{3}{2(n-1)}}K_{max},
$$
then $M$ is isometric (by scaling) to $\Lg{S}^n$.
\end{cor}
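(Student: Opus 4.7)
The plan is to leverage the Einstein hypothesis to convert the condition on $R_0$ into a statement about $\frac{Ric^{[4]}}{4(n-1)}$, apply Remark 3.2 to obtain pointwise positive isotropic curvature, and finally appeal to a rigidity theorem for Einstein manifolds with positive isotropic curvature. Because \autoref{2thRic} and \autoref{Kmax} only produce a diffeomorphism to $\Lg{S}^n$, whereas the corollary asserts isometric rigidity (up to scaling), the Einstein assumption must enter twice: first as a translation of hypotheses, and second as the hypothesis of a rigidity theorem.

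First, I would observe that Einstein means $\mathrm{Ric} = \tfrac{s}{n}g$ for a constant scalar curvature $s$, so in any orthonormal frame $Ric_{ii} = \tfrac{s}{n}$ and therefore
\[
\frac{Ric^{[4]}[e_{1},e_{2},e_{3},e_{4}]}{4(n-1)} \;=\; \frac{1}{n-1}\cdot \frac{s}{n} \;=\; \frac{s}{n(n-1)} \;=\; R_{0}
\]
pointwise. Hence the assumption $R_{0}>\bigl(1-\tfrac{3}{2(n-1)}\bigr)K_{max}$ is identical to $\tfrac{Ric^{[4]}}{4(n-1)}>\bigl(1-\tfrac{3}{2(n-1)}\bigr)K_{max}$, and \autoref{rem2} (i.e.\ the specialization $\lambda=1,\ \varepsilon_{0}=3$ of the proof of \autoref{2thRic}) applies directly to conclude that $M$ has positive isotropic curvature at every point.

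The final step is to upgrade ``simply connected closed Einstein manifold of dimension $\geq 4$ with positive isotropic curvature'' to ``isometric, up to scaling, to $\Lg{S}^{n}$''. For this I would invoke Brendle's rigidity theorem (\emph{Einstein manifolds with nonnegative isotropic curvature are locally symmetric}, Duke Math.\ J.\ 151 (2010), 1--21): any closed Einstein manifold of dimension $\geq 4$ with nonnegative isotropic curvature satisfies $\nabla R\equiv 0$. Since $M$ is also simply connected and compact, it is a simply-connected compact symmetric space. The strict positivity of its isotropic curvature then rules out every candidate in the de~Rham/Cartan classification except the round sphere: the complex and quaternionic projective spaces, the Cayley plane, and every higher-rank irreducible compact symmetric space contain totally isotropic $2$-planes on which the complex sectional curvature vanishes, and any nontrivial reducible factor also violates PIC. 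Hence $M$ is isometric, after rescaling, to $\Lg{S}^{n}$.

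The main obstacle is precisely the third step: obtaining isometric rigidity from PIC together with the Einstein condition requires a nontrivial Weitzenb\"ock-type argument, which is exactly the content of Brendle's theorem cited above. A self-contained alternative would be to carry out a Tachibana-type computation on the curvature tensor of an Einstein manifold and show directly, under our pinching hypothesis, that $\nabla R\equiv 0$, but this essentially reproduces Brendle's rigidity theorem and is no lighter than invoking it as a black box.
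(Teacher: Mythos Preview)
Your proposal is correct and follows essentially the same approach as the paper: use the Einstein condition to rewrite $R_0$ as $\tfrac{Ric^{[4]}}{4(n-1)}$, apply \autoref{rem2} to obtain positive isotropic curvature, and then invoke Brendle's rigidity theorem \cite{Bre10}. The only difference is cosmetic: the paper cites \cite[Theorem~1]{Bre10} as a black box for the final step, whereas you spell out the passage through local symmetry and the classification of simply connected symmetric spaces with PIC.
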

\begin{proof}
If $M$ is Einstein, then $Ric =cg$ for some positive constant $c$, the normalized scalar
curvature $R_0=\frac{Ric^{[4]}}{4(n-1)}$. From \autoref{rem2}, we know the isotropic curvature
is positive. Therefore,
by Brendle's Theorem (\cite[Theorem 1]{Bre10}) we obtain the conclusion.
\end{proof}

\vspace{2ex}

\section{Submanifolds with pinching  curvatures}
In this section, we will prove some sphere theorems for a Riemannian manifold isometrically immersed into
another with some pinching curvature conditions. It is worth pointing out that our pinching constants
in this section also improve Gu-Xu's corresponding pinching constants in \cite{GuXu12} and \cite{XuGu14}.

Let $R$ denote an algebraic curvature tensor, for every orthonormal four-frame
$\set{e_1, e_2, e_3, e_4}$ and $\lambda,\mu\in[-1,1]$, we give the following notation,
\begin{align*}
\mathcal{I}_{\lambda,\mu}(R) = &\frac{1}{(1+\lambda^2)(1+\mu^2)}\kh{R_{1313}+\lambda^2R_{1414}+\mu^2R_{2323}+ \lambda^2\mu^2R_{2424} -2\lambda\mu R_{1234}},
\end{align*}
and we  denote $\mathcal{I}_{\lambda, 1}(R)$ briefly by $\mathcal{I}_\lambda (R)$.

Therefore, by Gauss equation \eqref{Gausseq}, we have
\begin{align}\label{algebraGauss}
  \mathcal{I}_\lambda(R) =\mathcal{I}_\lambda(\bar{R}^T) +\mathcal{I}_\lambda\kh{\frac{1}{2}B\circledwedge B}.
\end{align}
Corresponding \autoref{Kmax}, we have the following result:
\begin{theorem}\label{subkmax}
 Let $M^n$ be an $n(\ge 4)$-dimensional closed submanifold
in an $N$-dimensional Riemannian manifold $\bar{M}^N$. \\
(1)\quad If, pointwisely,
$$
\abs{B}^2<\frac{2N(N-1)}{3}\zkh{\bar{R}_0-\kh{1-\frac{6}{N(N-1)}}\bar{K}_{max}}+\frac{n^2\abs{H}^2}{n-2},
$$
then $M$ has positive isotropic curvature. Therefore, $\pi_k(M)=0$ for  $2 \le k \le \zkh{\frac{n}{2}}$.
In particular,
if $M$ is simply connected, then $M$ is homeomorphic to a sphere.\\
(2) \quad If, pointwisely,
$$
\abs{B}^2<\frac{N(N-1)}{3}\zkh{\bar{R}_0-\kh{1-\frac{24(\sqrt{10}-3)}{N(N-1)}}\bar{K}_{max}}+\frac{n^2\abs{H}^2}{n-1},
$$
then $M$ is diffeomorphic to a spherical space form.
In particular,
if $M$ is simply connected, then $M$ is diffeomorphic to $\Lg{S}^n$.

\end{theorem}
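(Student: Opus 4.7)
The strategy is to apply the algebraic Gauss equation \eqref{algebraGauss} to decompose $\mathcal{I}_\lambda(R) = \mathcal{I}_\lambda(\bar R^T) + \mathcal{I}_\lambda(\tfrac{1}{2}B \circledwedge B)$, and then bound each piece separately: the ambient piece using the intrinsic computations from the proofs of \autoref{Kmax} and \autoref{rem1}, and the Kulkarni--Nomizu piece via a sharp algebraic lower bound in terms of $\abs{B}^2$ and $\abs{H}^2$. The conclusion will then follow from \autoref{toposphere} in part (1) and \autoref{diffsphere2} in part (2).

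First (ambient piece), fix $p\in M$ and any orthonormal four-frame $\set{e_1,e_2,e_3,e_4}\subset T_pM$; extend it to an orthonormal $N$-frame of $T_p\bar M$. The arguments in the proofs of \autoref{Kmax} and \autoref{rem1} are purely algebraic once a frame is fixed, so they apply verbatim to $\bar R$ with $N$ in place of $n$. This yields, for $\varepsilon_0 = 12(\sqrt{10}-3)$ and every $\lambda\in[0,1]$,
\[
2\varepsilon_0 (1+\lambda^2)\,\mathcal{I}_\lambda(\bar R^T) \;\ge\; N(N-1)\bar R_0 - \bigl(N(N-1) - 2\varepsilon_0\bigr)\bar K_{max},
\]
and in the sharper $\lambda = 1$, $\varepsilon_0 = 3$ case,
\[
12\,\mathcal{I}_1(\bar R^T) \;\ge\; N(N-1)\bar R_0 - \bigl(N(N-1) - 6\bigr)\bar K_{max}.
\]
Importantly, these are identities of algebraic curvature tensors and do not require any pinching hypothesis on $\bar M$ itself.

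Second (Kulkarni--Nomizu piece), pass to the complex isotropic frame $\zeta_1 = \tfrac{1}{\sqrt 2}(e_1 + \sqrt{-1}e_2)$, $\zeta_2 = \tfrac{1}{\sqrt 2}(e_3 + \sqrt{-1}e_4)$ (as advertised in the introduction). A direct computation gives
\[
\mathcal{I}_1\bigl(\tfrac{1}{2}B \circledwedge B\bigr) \;=\; \sum_\alpha\Bigl[h^\alpha(\zeta_1,\bar\zeta_1)\,h^\alpha(\zeta_2,\bar\zeta_2) - \abs{h^\alpha(\zeta_1,\bar\zeta_2)}^2\Bigr],
\]
with an analogous formula for general $\lambda$. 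Writing $h^\alpha = H^\alpha g + \mathring h^\alpha$ and controlling the mixed terms $H^\alpha \mathring h^\alpha(\zeta_j,\bar\zeta_j)$ by a weighted AM--GM inequality yields bounds of the form
\[
\mathcal{I}_1\bigl(\tfrac{1}{2}B\circledwedge B\bigr) \;\ge\; \frac{n^2\abs{H}^2}{8(n-2)} - \frac{\abs{B}^2}{8},
\]
with the corresponding inequality for general $\lambda\in[0,1]$ giving denominator $n-1$ (and a slightly different overall constant). Substituting both estimates into the decomposition of $\mathcal{I}_\lambda(R)$, the pinching hypothesis of part (1) (resp. part (2)) is tuned exactly so as to force $\mathcal{I}_1(R) > 0$ (resp. $\mathcal{I}_\lambda(R) > 0$ for every $\lambda\in[0,1]$) at every point and for every orthonormal four-frame. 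Part (1) thus gives that $M$ has positive isotropic curvature, and \autoref{toposphere} yields $\pi_k(M) = 0$ for $2 \le k \le \zkh{n/2}$, hence a homeomorphic sphere when $M$ is simply connected. Part (2) gives that $M\times\R$ has positive isotropic curvature, and \autoref{diffsphere2} produces a constant-curvature limit of the normalized Ricci flow, so $M$ is diffeomorphic to a spherical space form.

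The main obstacle is the sharp algebraic inequality of the Kulkarni--Nomizu step: extracting the precise constants $n-2$ and $n-1$ in the denominators of $\abs{H}^2$ requires a delicately balanced weighted AM--GM estimate --- essentially a Lagrange-multiplier-type optimization balancing the portion of $H^\alpha$ absorbed by the four-frame $\set{e_1,e_2,e_3,e_4}$ against the portion carried by its complement. The passage to a complex isotropic frame is what renders this calculation both tractable and tight, streamlining what would otherwise be an opaque real-coordinate manipulation.
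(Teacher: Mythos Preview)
Your approach is essentially the paper's: decompose via the Gauss equation, bound $\mathcal{I}_\lambda(\bar R^T)$ by the purely algebraic argument of \autoref{Kmax}/\autoref{rem1} (applied to $\bar R$ with $N$ in place of $n$), bound $\mathcal{I}_\lambda(\tfrac{1}{2}B\circledwedge B)$ below, and combine. The only difference is that the paper simply \emph{cites} \cite[Lemmas~9 and~11]{GuXu12} for the second-fundamental-form estimates
\[
4\,\mathcal{I}_1\bigl(\tfrac{1}{2}B\circledwedge B\bigr)\ge \frac{n^2\abs{H}^2}{n-2}-\abs{B}^2,
\qquad
2\,\mathcal{I}_\lambda\bigl(\tfrac{1}{2}B\circledwedge B\bigr)\ge \frac{n^2\abs{H}^2}{n-1}-\abs{B}^2\ \ (\lambda\in[0,1]),
\]
whereas you propose to rederive them via the complex isotropic frame --- exactly the technique the paper itself deploys later in the proofs of \autoref{subR0} and \autoref{epsilon}. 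That variant is fine and arguably more self-contained.

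Do watch your constants, though: the ambient inequality coming out of \eqref{eq:new2-4} reads $2\varepsilon_0\,\mathcal{I}_\lambda(\bar R^T)\ge\ldots$ (there is no extra factor $(1+\lambda^2)$, since $\mathcal{I}_\lambda$ already carries $\tfrac{1}{2(1+\lambda^2)}$ in its definition), and the Gu--Xu bound has prefactor $4$ (not $8$) in the $\lambda=1$ case. Your two slips happen to cancel in the final combination for part~(1), so the stated pinching constant still comes out right, but they should be corrected individually.
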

\begin{proof}
Let $\bar D$ be a constant satisfying $N(N-1)\bar D<\sum_{i,j=1}^N \bar{R}_{ijij}$.
Then a similar algebraic argument as the proof of \autoref{Kmax} gives a
similar inequality as \eqref{eq:new2-4}:
\begin{align}\label{isobarR}
6\mathcal{I}_\lambda\kh{\bar{R}^T} > N(N-1)\bar D -(N^2-N-2\varepsilon_0)\bar{K}_{max}.
\end{align}
For $\lambda=1$, we take $\varepsilon_0=3$.
In the proof of \cite[Lemma 9]{GuXu12}, the authors give the following estimate
\begin{align}\label{isoB1}
4\mathcal{I}_1\kh{\frac{1}{2}B\circledwedge B}\ge \frac{n^2H^2}{n-2} -\abs{B}^2.
\end{align}
Thus, \eqref{algebraGauss}, \eqref{isobarR} and \eqref{isoB1} yield
\begin{align*}
  \mathcal{I}_1(R)
  >&\frac{1}{6}\kh{N(N-1)\bar D -(N^2-N-6)\bar{K}_{max}} + \frac14 \kh{\frac{n^2\abs{H}^2}{n-2}-\abs{B}^2}\\
  = &\frac14\set{\frac{2N(N-1)}{3}\zkh{\bar{D}-\kh{1-\frac{6}{N(N-1)}}\bar{K}_{max}}+\frac{n^2\abs{H}^2}{n-2} -\abs{B}^2}.
\end{align*}
Combined with \autoref{toposphere}, we complete the proof of Claim (1).

For arbitrary $\lambda\in[0,1]$, we take $\varepsilon_0=12(\sqrt{10}-3)$.
In the proof of \cite[Lemma 11]{GuXu12}, the authors obtain
\begin{align}\label{isoB}
2\mathcal{I}_\lambda\kh{\frac{1}{2}B\circledwedge B} \ge \frac{n^2\abs{H}^2}{n-1} -\abs{B}^2, \quad \forall \lambda\in[0,1]
\end{align}
Thus, \eqref{algebraGauss}, \eqref{isobarR} and \eqref{isoB} give
\begin{align*}
  \mathcal{I}_\lambda(R)
  >&\frac{1}{6}\kh{N(N-1)\bar D -(N^2-N-24(\sqrt{10}-3))\bar{K}_{max}} + \frac12 \kh{\frac{n^2\abs{H}^2}{n-1}-\abs{B}^2}\\
  = &\frac12\set{\frac{N(N-1)}{3}\zkh{\bar{D}-\kh{1-\frac{24(\sqrt{10}-3)}{N(N-1)}}\bar{K}_{max}}+\frac{n^2\abs{H}^2}{n-1} -\abs{B}^2}.
\end{align*}
Then Claim (2) follows easily from \autoref{diffsphere2}.
\end{proof}

After a similar argument we also have the following two extrinsic sphere theorems corresponding to \autoref{Kmin} and \autoref{2thRic}.

\begin{theorem}\label{subkmin}
    Let $M^n$ be an $n(\ge 4)$-dimensional closed submanifold
in an $N$-dimensional Riemannian manifold $\bar{M}^N$. \\
(1)\quad If, pointwisely,
$$
\abs{B}^2<\frac{N^2-N+12}{3}\kh{ \bar{K}_{min}-\left(1-\dfrac{12}{N^2-N+12}\right)\bar{R}_0}+\frac{n^2\abs{H}^2}{n-2},
$$
then $M$ has positive isotropic curvature. Therefore, $\pi_k(M)=0$ for  $2 \le k \le \zkh{\frac{n}{2}}$.
In particular,
if $M$ is simply connected, then $M$ is homeomorphic to a sphere.\\
(2) \quad If, pointwisely,
$$
\abs{B}^2<\frac{N^2-N+12}{6}\kh{ \bar{K}_{min}-\left(1-\dfrac{12}{N^2-N+12}\right)\bar{R}_0}+\frac{n^2\abs{H}^2}{n-1},
$$
then $M$ is diffeomorphic to a spherical space form.
In particular,
if $M$ is simply connected, then $M$ is diffeomorphic to $\Lg{S}^n$.

\end{theorem}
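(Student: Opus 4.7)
The proof follows the template of \autoref{subkmax} line by line, only substituting the intrinsic estimate derived from \autoref{Kmin} for the one derived from \autoref{Kmax}. Starting from the Gauss decomposition \eqref{algebraGauss}
\[
\mathcal{I}_\lambda(R)=\mathcal{I}_\lambda(\bar{R}^T)+\mathcal{I}_\lambda\bigl(\tfrac{1}{2}B\circledwedge B\bigr),
\]
I will bound the intrinsic term from below in terms of $\bar K_{min}$ and $\bar R_0$, and the extrinsic term from below using the two estimates $4\,\mathcal{I}_1(\tfrac{1}{2}B\circledwedge B)\ge \tfrac{n^2\abs{H}^2}{n-2}-\abs{B}^2$ and $2\,\mathcal{I}_\lambda(\tfrac{1}{2}B\circledwedge B)\ge \tfrac{n^2\abs{H}^2}{n-1}-\abs{B}^2$ for $\lambda\in[0,1]$ borrowed from \cite{GuXu12}, both of which already appear in the proof of \autoref{subkmax}.

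The central step is the intrinsic estimate
\[
12\,\mathcal{I}_\lambda(\bar{R}^T)\ge (N^2-N+12)\bar K_{min}-N(N-1)\bar R_0,\quad\forall\lambda\in[0,1].
\]
To derive it, fix an orthonormal four-frame $\{e_1,e_2,e_3,e_4\}\subset T_pM$ and extend it to an orthonormal basis $\{e_1,\dots,e_N\}$ of $T_p\bar M$. The algebraic computation inside the proof of \autoref{Kmin} uses only the symmetries of a Riemannian curvature tensor together with $\sum_{i,j=1}^N\bar R_{ijij}=N(N-1)\bar R_0$ and the pointwise bound $\bar R_{ijij}\ge \bar K_{min}$, and hence applies verbatim to $\bar R$ on the ambient $N$-dimensional space. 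It yields
\[
12\bigl(\bar R_{1313}+\bar R_{2323}\pm \bar R_{1234}\bigr)\ge X,\qquad 12\bigl(\bar R_{1414}+\bar R_{2424}\pm \bar R_{1234}\bigr)\ge X,
\]
where $X\coloneqq 2(N^2-N+12)\bar K_{min}-2N(N-1)\bar R_0$, the second pair of inequalities following by the relabeling $\{e_1,e_2\}\leftrightarrow\{e_3,e_4\}$. Consequently $\bar R_{1313}+\bar R_{2323}\ge X/12+\abs{\bar R_{1234}}$ and $\bar R_{1414}+\bar R_{2424}\ge X/12+\abs{\bar R_{1234}}$; weighting these by $1$ and $\lambda^2$, subtracting $2\lambda\bar R_{1234}$, and using $(1+\lambda^2)\abs{\bar R_{1234}}\ge 2\lambda\bar R_{1234}$ on $[0,1]$ gives the displayed intrinsic bound.

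With Step~1 in hand, both parts are immediate. For part (1), taking $\lambda=1$ in the Gauss decomposition and combining the intrinsic and extrinsic lower bounds gives
\[
12\,\mathcal{I}_1(R)\ge \bigl[(N^2-N+12)\bar K_{min}-N(N-1)\bar R_0\bigr]+3\Bigl(\tfrac{n^2\abs{H}^2}{n-2}-\abs{B}^2\Bigr),
\]
whose right-hand side is strictly positive precisely under the hypothesis of (1); hence $M$ has positive isotropic curvature and \autoref{toposphere} concludes. Part (2) is entirely parallel: for every $\lambda\in[0,1]$,
\[
12\,\mathcal{I}_\lambda(R)\ge \bigl[(N^2-N+12)\bar K_{min}-N(N-1)\bar R_0\bigr]+6\Bigl(\tfrac{n^2\abs{H}^2}{n-1}-\abs{B}^2\Bigr),
\]
and the hypothesis of (2) makes the right-hand side positive for every $\lambda\in[0,1]$, so $M\times\mathbb{R}$ has positive isotropic curvature and \autoref{diffsphere2} yields the constant-curvature limit under the normalized Ricci flow. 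The only step requiring any care is the dimension shift inside Step~1: the intrinsic argument must be run on the ambient $N$-dimensional $\bar M$ (after extending the tangential four-frame to a full orthonormal basis), which is what places $N^2-N+12$ rather than $n^2-n+12$ in the final pinching constants.
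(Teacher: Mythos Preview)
Your proof is correct and matches the paper's own approach: the paper explicitly states that \autoref{subkmin} follows ``after a similar argument'' to \autoref{subkmax}, and you have carried out precisely that argument, transplanting the algebraic estimate from the proof of \autoref{Kmin} to the ambient curvature tensor $\bar R$ in dimension $N$ and combining it with the same extrinsic bounds \eqref{isoB1} and \eqref{isoB}. One tiny quibble: the relabeling that converts the $(\bar R_{1313}+\bar R_{2323})$ inequality into the $(\bar R_{1414}+\bar R_{2424})$ one is the swap $e_3\leftrightarrow e_4$ (as in the paper's ``Similarly'' step in the proof of \autoref{Kmin}), not $\{e_1,e_2\}\leftrightarrow\{e_3,e_4\}$; but this does not affect the argument.
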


\begin{theorem}\label{subric}
    Let $M^n$ be an $n(\ge 4)$-dimensional closed submanifold
in an $N$-dimensional Riemannian manifold $\bar{M}^N$. \\
(1)\quad If, pointwisely,
$$
\abs{B}^2 < \frac{8(N-1)}{3}\kh{\frac{\overline{Ric}^{[4]}_{min}}{4(N-1)}-\kh{1-\frac{3}{2(N-1)}}\bar{K}_{max}} +\frac{n^2H^2}{n-2},
$$
then $M$ has positive isotropic curvature. Therefore, $\pi_k(M)=0$ for  $2 \le k \le \zkh{\frac{n}{2}}$.
In particular,
if $M$ is simply connected, then $M$ is homeomorphic to a sphere.\\
(2) \quad If, pointwisely,
$$
\abs{B}^2 < \frac{4(N-1)}{3}\kh{\frac{\overline{Ric}^{[4]}_{min}}{4(N-1)}-\kh{1-\frac{6(\sqrt{10}-3)}{N-1}}\bar{K}_{max}} +\frac{n^2H^2}{n-1},
$$
then $M$ is diffeomorphic to a spherical space form.
In particular,
if $M$ is simply connected, then $M$ is diffeomorphic to $\Lg{S}^n$.
\end{theorem}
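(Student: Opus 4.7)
My plan is to mirror the proof template of \autoref{subkmax} and \autoref{subkmin}, substituting the intrinsic ingredient from \autoref{2thRic} in place of the ones from \autoref{Kmax} and \autoref{Kmin}. The starting point is, for any orthonormal four-frame $\set{e_1,e_2,e_3,e_4}\subset T_pM\subset T_p\bar M$ and any $\lambda\in[0,1]$, the algebraic Gauss decomposition \eqref{algebraGauss}:
\begin{equation*}
\mathcal{I}_\lambda(R)=\mathcal{I}_\lambda(\bar R^T)+\mathcal{I}_\lambda\!\left(\tfrac12 B\circledwedge B\right).
\end{equation*}
I will bound the two summands from below separately and then add them.

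For the ambient term $\mathcal{I}_\lambda(\bar R^T)$, I extend $\set{e_1,e_2,e_3,e_4}$ to an orthonormal basis $\set{e_1,\ldots,e_N}$ of $T_p\bar M$ and rerun the combinatorial argument from the proof of \autoref{2thRic} with $\bar R$ in place of $R$. The three ingredients are \autoref{lem2}, the decomposition
\begin{equation*}
\sum_{k=1}^4\overline{Ric}_{kk}=\sum_{i=1}^4\sum_{j=5}^N\bar R_{ijij}+2\sum_{1\le i<j\le 4}\bar R_{ijij},
\end{equation*}
and the trivial bound $\sum_{i=1}^4\sum_{j=5}^N\bar R_{ijij}\le 4(N-4)\bar K_{max}$. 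Combining these with the algebraic estimate $\sum_{1\le i<j\le 4}\bar R_{ijij}\le(6-\varepsilon_0)\bar K_{max}+\varepsilon_0\,\mathcal{I}_\lambda(\bar R^T)$ already extracted in the proof of \autoref{Kmax}, I obtain
\begin{equation*}
6\,\mathcal{I}_\lambda(\bar R^T)>\overline{Ric}^{[4]}_{min}-\bigl(4(N-1)-2\varepsilon_0\bigr)\bar K_{max},
\end{equation*}
for the two distinguished constants $\varepsilon_0=3$ (used with $\lambda=1$) and $\varepsilon_0=12(\sqrt{10}-3)$ (used with $\lambda\in[0,1]$).

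For the second fundamental form term, I apply the two Gu-Xu estimates already used in the proof of \autoref{subkmax}: \eqref{isoB1} gives $4\mathcal{I}_1\!\left(\tfrac12 B\circledwedge B\right)\ge\frac{n^2|H|^2}{n-2}-|B|^2$, and \eqref{isoB} gives $2\mathcal{I}_\lambda\!\left(\tfrac12 B\circledwedge B\right)\ge\frac{n^2|H|^2}{n-1}-|B|^2$ for every $\lambda\in[0,1]$. For claim (1), plugging $\varepsilon_0=3$ and $\lambda=1$ into the Gauss decomposition and multiplying by $12$, the pinching hypothesis rearranges precisely to $\mathcal{I}_1(R)>0$, i.e., $M$ has positive isotropic curvature, so \autoref{toposphere} concludes the argument. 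For claim (2), plugging $\varepsilon_0=12(\sqrt{10}-3)$ and arbitrary $\lambda\in[0,1]$ matches the pinching hypothesis to $\mathcal{I}_\lambda(R)>0$ for every $\lambda\in[0,1]$, and \autoref{diffsphere2} together with the remark after it (which reduces $\lambda\in[-1,0]$ to $\lambda\in[0,1]$ by sending $e_4\mapsto-e_4$) then finishes.

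I anticipate no serious geometric obstacle: every key estimate is already available, namely the intrinsic inequality from the proof of \autoref{2thRic} transplanted into $\bar M$ and the Gu-Xu bounds on $\mathcal{I}_\lambda\!\left(\tfrac12 B\circledwedge B\right)$. The only care required is algebraic bookkeeping: verifying that the arithmetic produces the pinching constants $\frac{8(N-1)}{3}$, $\frac{3}{2(N-1)}$, $\frac{4(N-1)}{3}$, and $\frac{6(\sqrt{10}-3)}{N-1}$ exactly as stated in (1) and (2).
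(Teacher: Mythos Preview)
Your proposal is correct and matches the paper's intended argument exactly: the paper omits a proof and simply writes ``after a similar argument,'' meaning the template of \autoref{subkmax} with the ambient estimate taken from the proof of \autoref{2thRic} in place of \autoref{Kmax}. One bookkeeping caveat: combining your three displayed ingredients actually yields $2\varepsilon_0\,\mathcal{I}_\lambda(\bar R^T)\ge\overline{Ric}^{[4]}_{min}-\bigl(4(N-1)-2\varepsilon_0\bigr)\bar K_{max}$, not $6\,\mathcal{I}_\lambda$; the two coincide for $\varepsilon_0=3$ in part~(1) but not for $\varepsilon_0=12(\sqrt{10}-3)$ in part~(2)---the paper's own \eqref{isobarR} carries the same slip, and with the correct coefficient the arithmetic in part~(2) in fact delivers a pinching constant $\tfrac{4(N-1)}{\varepsilon_0}=\tfrac{(N-1)(\sqrt{10}+3)}{3}$ stronger than the stated $\tfrac{4(N-1)}{3}$.
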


Also we have the following corollary corresponding to \autoref{Einstein}.
\begin{cor}
  Let $M^n$ be an $n(\ge 4)$-dimensional closed Einstein submanifold
in an $N$-dimensional Riemannian manifold $\bar{M}^N$.  If, pointwisely,
$$
\abs{B}^2 < \frac{8(N-1)}{3}\kh{\frac{\overline{Ric}^{[4]}_{min}}{4(N-1)}-\kh{1-\frac{3}{2(N-1)}}\bar{K}_{max}} +\frac{n^2H^2}{n-2},
$$
then $M$ is isometric to a spherical space form. In particular,
if $M$ is simply connected, then $M$ is isometric to $\Lg{S}^n$ (by scaling).
\end{cor}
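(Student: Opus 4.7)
The plan is to reduce the statement to two results already in the excerpt: \autoref{subric} part (1) converts the extrinsic pinching hypothesis into pointwise positivity of the intrinsic isotropic curvature, and Brendle's Einstein rigidity theorem upgrades this to an isometric conclusion. This is the submanifold analogue of how \autoref{Einstein} was deduced from \autoref{2thRic} via \autoref{rem2} in the intrinsic case.

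The first step is to observe that the pinching hypothesis in the corollary is literally the hypothesis of \autoref{subric} (1). Consequently, at every point $p\in M$ and for every orthonormal four-frame $\{e_1,e_2,e_3,e_4\}\subset T_pM$, the intrinsic curvature tensor of $M$ satisfies
$$R_{1313}+R_{1414}+R_{2323}+R_{2424}-2R_{1234}>0,$$
so $M$ has pointwise positive isotropic curvature. (Concretely, the proof of \autoref{subric} applies the Gauss equation \eqref{algebraGauss} together with the $\lambda=1$, $\varepsilon_0=3$ specialization that also underlies \autoref{rem2}, and estimates $2\mathcal{I}_1(\tfrac12 B\circledwedge B)$ from below using $\abs{B}^2$ and $\abs{H}^2$; I would not redo this calculation here.)

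The second step invokes Brendle's rigidity theorem \cite[Theorem 1]{Bre10}: any closed Einstein manifold of dimension $n\ge 4$ with positive isotropic curvature is, after rescaling, isometric to a quotient of the round sphere. Since $M$ is closed, Einstein by assumption, and has positive isotropic curvature by the previous step, this theorem applies directly and yields that $M$ is isometric to a spherical space form. If in addition $M$ is simply connected, then the only simply connected $n$-dimensional spherical space form is $\Lg{S}^n$ itself, so $M$ is isometric (after scaling) to the standard round sphere.

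There is essentially no real obstacle in this argument beyond checking that the extrinsic pinching in the statement matches the one appearing in \autoref{subric} (1), which it does verbatim; all the analytic content has been packaged into \autoref{subric} and into Brendle's theorem. The only place where one must be slightly careful is to ensure that the positivity obtained from \autoref{subric} (1) is the strict isotropic positivity needed as hypothesis by \cite[Theorem 1]{Bre10} — which is the case because the inequality in the hypothesis of the corollary is strict pointwise, and this strictness is preserved through the proof of \autoref{subric} (1).
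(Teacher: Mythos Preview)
Your proposal is correct and follows exactly the route the paper intends: the corollary is stated immediately after \autoref{subric} with the phrase ``corresponding to \autoref{Einstein}'' and no separate proof, so the argument is precisely the one you give---use \autoref{subric}\,(1) to obtain positive isotropic curvature from the (identical) pinching hypothesis, then invoke Brendle's Einstein rigidity theorem \cite[Theorem~1]{Bre10}. The only minor slip is the factor in your parenthetical: the estimate from \cite[Lemma~9]{GuXu12} used in the paper is $4\mathcal{I}_1\!\left(\tfrac12 B\circledwedge B\right)\ge \frac{n^2H^2}{n-2}-\abs{B}^2$, not $2\mathcal{I}_1$, but this does not affect your argument.
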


\begin{rem}
  Using a similar method, we also can get a sphere theorem under pinched curvature by $K_{min}$. But since the pinching constant
  is the same as Gu-Xu's result in \cite{GuXu12}, we omit here.
\end{rem}

\vspace{2ex}

Next we will use a complex orthonormal frame to state the proofs of \autoref{subR0}, \autoref{subric1} and
\autoref{subric2}. One can verify that in suitable complex orthonormal frame, the calculations will be considerably simplified.

\begin{proof}[Proof of \autoref{subR0}]
  Let $e_1, \cdots, e_n$ be a local orthonormal frame of $TM$.
For $\lambda, \mu\in[0,1]$, define
$$
\varepsilon_1=\dfrac{e_1+\sqrt{-1}\lambda e_2}{\sqrt{1+\lambda^2}},\quad\varepsilon_2=\dfrac{e_3+\sqrt{-1}\mu e_4}{\sqrt{1+\mu^2}},
$$
and extend these two vectors to be a local orthonormal frame of
$T^{\mathbb C}M$.
Then a direct computation gives
$$
R_{12\bar1\bar2} = R(\varepsilon_1, \varepsilon_2, \bar{\varepsilon}_1, \bar{\varepsilon}_2) =\mathcal{I}_{\lambda, \mu} (R).
$$

We first claim that
\begin{align}\label{eq:4-2}
\sum_{i,j=1}^n\bar{R}_{ij\,\bar{i}\bar{j}}\le (n^2-n-2)\bar{K}_{max}+2\bar{R}_{12\bar1\bar2}.
 \end{align}

If this is true, then
\eqref{complexRicci} and \eqref{eq:4-2} give
\begin{align*}
n(n-1)R_0 =& \sum_{i,j=1}^n R_{ij\,\bar{i}\bar{j}}\\
=&\sum_{i,j=1}^n\bar{R}_{ij\,\bar{i}\bar{j}} +n(n-1)\abs{H}^2
-\sum_{i,j=1}^n\sum_\alpha \abs{\mathring{h}^\alpha_{i \bar{j}}}^2\\
\le & (n^2-n-2)\bar{K}_{max} + 2\bar{R}_{12\bar1\bar2} +n(n-1)\abs{H}^2
-\sum_{i,j=1}^n\sum_\alpha \abs{\mathring{h}^\alpha_{i \bar{j}}}^2\\
=& (n^2-n-2)\bar{K}_{max} + 2 R_{12\bar1\bar2} +n(n-1)\abs{H}^2\\
&-2\kh{\abs{H}^2+ \sum_\alpha\kh{H^{\alpha}\left(\mathring{h}^{\alpha}_{1\bar1}
+\mathring{h}^{\alpha}_{2\bar 2}\right)+\mathring{h}^{\alpha}_{1\bar 1}\mathring{h}^{\alpha}_{2\bar 2}
-\abs{\mathring{h}^{\alpha}_{1\bar 2}}^2}}-\sum_{i,j=1}^n\sum_\alpha \abs{\mathring{h}^\alpha_{i \bar{j}}}^2.
\end{align*}
On the other hand,
\begin{align*}
  &-2\kh{\abs{H}^2+ \sum_\alpha\kh{H^{\alpha}\left(\mathring{h}^{\alpha}_{1\bar1}
+\mathring{h}^{\alpha}_{2\bar 2}\right)+\mathring{h}^{\alpha}_{1\bar 1}\mathring{h}^{\alpha}_{2\bar 2}
-\abs{\mathring{h}^{\alpha}_{1\bar 2}}^2}}-\sum_{i,j=1}^n\sum_\alpha \abs{\mathring{h}^\alpha_{i \bar{j}}}^2\\
\le&-2\kh{\abs{H}^2+ \sum_\alpha\kh{H^{\alpha}\left(\mathring{h}^{\alpha}_{1\bar1}
+\mathring{h}^{\alpha}_{2\bar 2}\right)+\mathring{h}^{\alpha}_{1\bar 1}\mathring{h}^{\alpha}_{2\bar 2}
-\abs{\mathring{h}^{\alpha}_{1\bar 2}}^2}}-\sum_\alpha\kh{\abs{\mathring{h}^{\alpha}_{1\bar 1}}^2
+\abs{\mathring{h}^{\alpha}_{2\bar 2}}^2+2\abs{\mathring{h}^{\alpha}_{1\bar 2}}^2
+\sum_{i,j=3}^n \abs{\mathring{h}^\alpha_{i \bar{j}}}^2}\\
\le & -2 \abs{H}^2 -2 \sum_\alpha H^{\alpha}\left(\mathring{h}^{\alpha}_{1\bar1}
+\mathring{h}^{\alpha}_{2\bar 2}\right)
-\sum_\alpha \kh{\abs{\mathring{h}^\alpha_{1 \bar{1}}}^2 +\abs{\mathring{h}^\alpha_{2 \bar{2}}}^2
+2\mathring{h}^{\alpha}_{1\bar 1}\mathring{h}^{\alpha}_{2\bar 2}}
-\frac{1}{n-2}\sum_\alpha\kh{\mathring{h}^\alpha_{1\bar1}+\mathring{h}^\alpha_{2\bar2}}^2\\
= & -2 \abs{H}^2 -2 \sum_\alpha H^{\alpha}\left(\mathring{h}^{\alpha}_{1\bar1}
+\mathring{h}^{\alpha}_{2\bar 2}\right)
-\kh{1+\frac{1}{n-2}}\sum_\alpha\kh{\mathring{h}^\alpha_{1\bar1}+\mathring{h}^\alpha_{2\bar2}}^2\\
\le & -\frac{n}{n-1}\abs{H}^2,
\end{align*}
where in the second inequality, we have used
\begin{align*}
 \sum_{i,j=3}^n\abs{\mathring{h}^{\alpha}_{i\bar j}}^2
 \ge \sum_{i=3}^n \abs{\mathring{h}^{\alpha}_{i\bar i}}^2
 \ge \frac{\kh{\sum_{i=3}^n \mathring{h}^{\alpha}_{i\bar i}}^2}{n-2}
 =\frac{\kh{\mathring{h}^{\alpha}_{1\bar 1}+\mathring{h}^{\alpha}_{2\bar 2}}^2}{n-2}.
\end{align*}
Therefore, we have
\begin{align*}
n(n-1)R_0\leq
  (n^2-n-2)\bar{K}_{max} + 2 R_{12\bar1\bar2} +\kh{n(n-1)-\frac{n}{n-1}}\abs{H}^2,
\end{align*}
which implies
\begin{align}\label{eq:4-3}
2 R_{12\bar1\bar2}\geq n(n-1)\zkh{R_0
 - \kh{\kh{1-\frac{2}{n(n-1)}}\bar{K}_{max}   +\frac{n(n-2)}{(n-1)^2}\abs{H}^2}}.
\end{align}
Thus, by the assumption of this theorem and \eqref{eq:4-3}, we have
$R_{12\bar1\bar2}\geq0$. Therefore, $M\times\R^2$ has nonnegative isotropic curvature (see for example \cite[Proposition 17.8]{Bre10a}). Also by the assumption,  the isotropic curvature of $M\times\R^2$ is positive at some point. Consequently, $M$ has nonnegative isotropic curvature
and positive isotropic curvature at some point. Then   $M$  admits a metric with positive
isotropic curvature (see \cite{Ses09}). Therefore, $M$ is a topological sphere by \autoref{toposphere}. But by the classification theorem of Brendle-Schoen (\autoref{classif}), $M$ must be diffeomorphic to
$\Lg{S}^n$.

It remains to prove the inequality \eqref{eq:4-2}. Under the orthonormal frames $\set{e_i}$, this inequality is equivalent to
\begin{align*}
\sum_{i,j=1}^n\bar R_{ijij}\leq(n^2-n-2)\bar K_{max}+2\mathcal{I}_{\lambda,\mu}\kh{\bar{R}^T}.
\end{align*}
Notice that
\begin{align*}
\sum_{i,j=1}^n\bar R_{ijij}=&2\mathcal{I}_{\lambda,\mu}\kh{\bar{R}^T}+2\kh{\sum_{1\le i<j\le 4}\bar{R}_{ijij}-\mathcal{I}_{\lambda,\mu}\kh{\bar{R}^T}}
  +2\sum_{i=1}^4\sum_{j=5}^n \bar{R}_{ijij} +\sum_{i,j=5}^n \bar{R}_{ijij}\\
  \leq&2\mathcal{I}_{\lambda,\mu}\kh{\bar{R}^T}+2\kh{\sum_{1\le i<j\le 4}\bar{R}_{ijij}-\mathcal{I}_{\lambda,\mu}\kh{\bar{R}^T}}
  +(n^2-n-12)\bar K_{max}.
\end{align*}
Therefore, it is sufficient to prove
\begin{align*}
\sum_{1\le i<j\le 4}\bar{R}_{ijij}-\mathcal{I}_{\lambda,\mu}\kh{\bar{R}^T}\leq 5\bar K_{max}.
\end{align*}
A direct computation using
 \eqref{lem2a} yields
 \begin{align*}
   &\sum_{1\le i<j\le 4}\bar{R}_{ijij}-\mathcal{I}_{\lambda, \mu}(\bar{R}^T)\\
   =& \sum_{1\le i<j\le 4}\bar{R}_{ijij}
   - \frac{1}{(1+\lambda^2)(1+\mu^2)}\kh{\bar R_{1313}+\lambda^2\bar R_{1414}+\mu^2\bar R_{2323}+ \lambda^2\mu^2\bar R_{2424} -2\lambda\mu \bar R_{1234}}\\
   =&\kh{1-\frac{2\lambda\mu}{3(1+\lambda^2)(1+\mu^2)}}\kh{\bar{R}_{1212}+\bar{R}_{3434}} +\kh{1-\frac{3+\lambda\mu}{3(1+\lambda^2)(1+\mu^2)}}\bar{R}_{1313}\\
   &+\kh{1-\frac{3\lambda^2+\lambda\mu}{3(1+\lambda^2)(1+\mu^2)}}\bar{R}_{1414}
   +\kh{1-\frac{3\mu^2+\lambda\mu}{3(1+\lambda^2)(1+\mu^2)}}\bar{R}_{2323}
   +\kh{1-\frac{3\lambda^2\mu^2+\lambda\mu}{3(1+\lambda^2)(1+\mu^2)}}\bar{R}_{2424}\\
   & +\dfrac{\lambda\mu\left(\bar K(e_1+e_3, e_2+e_4) +\bar K(e_1-e_3, e_2-e_4)
+ \bar K(e_2+e_3, e_1-e_4) +\bar K(e_2-e_3, e_1+e_4)\right)}{6(1+\lambda^2)(1+\mu^2)}\\
\le &\kh{1-\frac{2\lambda\mu}{3(1+\lambda^2)(1+\mu^2)}}\cdot 2\bar K_{max} +\kh{1-\frac{3+\lambda\mu}{3(1+\lambda^2)(1+\mu^2)}}\bar K_{max}
+\kh{1-\frac{3\lambda^2+\lambda\mu}{3(1+\lambda^2)(1+\mu^2)}}\bar K_{max}
\\
&+\kh{1-\frac{3\mu^2+\lambda\mu}{3(1+\lambda^2)(1+\mu^2)}}\bar K_{max}
   +\kh{1-\frac{3\lambda^2\mu^2+\lambda\mu}{3(1+\lambda^2)(1+\mu^2)}}\bar K_{max}
   +\dfrac{16\lambda\mu}{6(1+\lambda^2)(1+\mu^2)}\bar K_{max}\\
=&5\bar K_{max}.
 \end{align*}
\end{proof}

\autoref{subric1} and \autoref{subric2} are easy consequences of the following theorem:

\begin{theorem}\label{epsilon}
For fixed $0<\varepsilon\le 1$,  set
$\delta(\varepsilon, n) = \frac{\kh{(n-4)\varepsilon+2}^2n^2}{8\kh{2+\kh{n^2-4n+2}\varepsilon}}$.
Suppose $M^n (n\geq4)$ is a closed simply connected submanifold of $\bar{M}^N$ satisfying
\begin{align*}
\frac{Ric^{[2]}}{2}\geq(n-1-\varepsilon)\bar K_{max}+\delta(\varepsilon, n)\abs{H}^2,
\end{align*}
with strict inequality at some point,
then $M$ is   diffeomorphic to $\Lg{S}^n$.
 \end{theorem}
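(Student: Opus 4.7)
The plan is to deduce from the pinching hypothesis that $M$ has nonnegative complex sectional curvature with strict positivity at the pinched point, after which the Brendle--Schoen classification \autoref{classif} combined with Seshadri's theorem \cite{Ses09} will rule out the K\"ahler and symmetric-space alternatives and force $M\cong\Lg S^n$ (by the same logic that closes the proof of \autoref{subR0}). Concretely, it suffices to verify $\mathcal I_{\lambda,\mu}(R)\geq 0$ at every point, for every orthonormal four-frame $\{e_1,e_2,e_3,e_4\}$ and every $\lambda,\mu\in[0,1]$, with strict inequality attained somewhere.

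Fix such a four-frame and put $\varepsilon_1=(e_1+\sqrt{-1}\lambda e_2)/\sqrt{1+\lambda^2}$ and $\varepsilon_2=(e_3+\sqrt{-1}\mu e_4)/\sqrt{1+\mu^2}$, so that $R_{12\bar 1\bar 2}=\mathcal I_{\lambda,\mu}(R)$. A direct expansion using the $\mathbb C$-linearity of Ricci gives
\begin{equation*}
Ric_{1\bar 1}+Ric_{2\bar 2}=\frac{(Ric_{11}+Ric_{33})+\mu^2(Ric_{11}+Ric_{44})+\lambda^2(Ric_{22}+Ric_{33})+\lambda^2\mu^2(Ric_{22}+Ric_{44})}{(1+\lambda^2)(1+\mu^2)}\geq Ric^{[2]},
\end{equation*}
since each numerator pair is a two-frame Ricci sum. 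Combining this with the decomposition $Ric_{1\bar 1}+Ric_{2\bar 2}=2R_{12\bar 1\bar 2}+\sum_{k\geq 3}(R_{1k\bar 1\bar k}+R_{2k\bar 2\bar k})$, expanding every $R_{ij\bar i\bar j}$ on the right via the complex Gauss equation \eqref{complexGauss}, and exploiting $\sum_k\mathring h^\alpha_{k\bar k}=0$ to collapse the diagonal $\mathring h$-sums, the inequality $R_{12\bar 1\bar 2}\geq 0$ reduces to a pointwise scalar inequality whose only nontrivial ingredients are the cross term $(n-2)\sum_\alpha H^\alpha M^\alpha$ and the norm sum $\sum_\alpha\sum_k(|\mathring h^\alpha_{1\bar k}|^2+|\mathring h^\alpha_{2\bar k}|^2)$, where $M^\alpha:=\mathring h^\alpha_{1\bar 1}+\mathring h^\alpha_{2\bar 2}$.

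The main obstacle is the sharp Cauchy--Schwarz step that must simultaneously absorb the $\varepsilon\bar K_{max}$-gap (arising from the target $(n-1-\varepsilon)$ versus the naive bound $\overline{Ric}^T(\varepsilon_i,\bar\varepsilon_i)\leq(n-1)\bar K_{max}$) and the cross term $(n-2)\sum_\alpha H^\alpha M^\alpha$ into the prescribed coefficient $2\delta(\varepsilon,n)|H|^2$. The expected mechanism is to introduce a weight $t=t(\varepsilon,n)>0$, apply $(n-2)H^\alpha M^\alpha\leq t(M^\alpha)^2+(n-2)^2(H^\alpha)^2/(4t)$ together with the elementary inequality $(M^\alpha)^2\leq 2\sum_k(|\mathring h^\alpha_{1\bar k}|^2+|\mathring h^\alpha_{2\bar k}|^2)$, and then optimise $t$ so that the coefficient of $|H|^2$ sharpens to $\delta(\varepsilon,n)=((n-4)\varepsilon+2)^2n^2/[8(2+(n^2-4n+2)\varepsilon)]$. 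As a consistency check, $\varepsilon=1$ does give $\delta=n^2/8$ and $\varepsilon=2/(n-2)$ does give $\delta=n(n-3)/(n-2)$, so the argument will recover the pinching constants of \autoref{subric1} and \autoref{subric2} respectively. With $R_{12\bar 1\bar 2}\geq 0$ established in full generality, the strict inequality at the pinched point propagates through each step of the chain and supplies a point of strict positivity, which together with \autoref{classif} closes the argument.
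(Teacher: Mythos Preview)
Your outline recovers only the case $\varepsilon=1$ and cannot reach general $\varepsilon\in(0,1]$. Tracing your decomposition $Ric_{1\bar1}+Ric_{2\bar2}=2R_{12\bar1\bar2}+\sum_{k\geq3}(R_{1k\bar1\bar k}+R_{2k\bar2\bar k})$ and expanding the sum via Gauss (using $\sum_{k\geq3}\mathring h^\alpha_{k\bar k}=-M^\alpha$) yields
\[
2R_{12\bar1\bar2}=(Ric_{1\bar1}+Ric_{2\bar2})-\sum_{k\geq3}(\bar R_{1k\bar1\bar k}+\bar R_{2k\bar2\bar k})-2(n-2)|H|^2-\sum_\alpha\Bigl[(n-4)H^\alpha M^\alpha-(M^\alpha)^2-\!\!\sum_{k\geq3}(|\mathring h^\alpha_{1\bar k}|^2+|\mathring h^\alpha_{2\bar k}|^2)\Bigr].
\]
The ambient term $\sum_{k\geq3}(\bar R_{1k\bar1\bar k}+\bar R_{2k\bar2\bar k})$ is bounded above by $2(n-2)\bar K_{max}$, and this coefficient is fixed: nothing in your Cauchy--Schwarz step with weight $t$ touches the $\bar K_{max}$ term, since $t$ only mediates between $(M^\alpha)^2$ and $|H|^2$. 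Optimising over $t$ (or simply completing the square, since $(M^\alpha)^2$ already appears with coefficient $1$) gives $R_{12\bar1\bar2}\geq\frac{Ric^{[2]}}{2}-(n-2)\bar K_{max}-\frac{n^2}{8}|H|^2$, which is exactly \autoref{subric1} but nothing stronger. Your claimed ``$\varepsilon\bar K_{max}$-gap absorption'' mechanism does not exist in this setup.

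The missing idea is that the parameter $\varepsilon$ must be introduced as a \emph{convex weight} between two separate applications of the Ricci hypothesis. The paper uses
\[
D\leq\varepsilon\cdot\frac{Ric_{1\bar1}+Ric_{2\bar2}}{2}+(1-\varepsilon)\cdot\frac{1}{n-2}\sum_{i=3}^nRic_{i\bar i},
\]
where the second summand is also $\geq D$ since it is an average of pairwise Ricci sums over the complementary directions. Expanding both pieces via \eqref{complexRicci} brings in the extra negative term $-\frac{1-\varepsilon}{n-2}\sum_{i\geq3}\sum_j|\mathring h^\alpha_{i\bar j}|^2$, and the key inequality is then $\sum_{i\geq3}|\mathring h^\alpha_{i\bar i}|^2\geq\frac{(M^\alpha)^2}{n-2}$ (Cauchy--Schwarz on the \emph{complementary} indices, not your inequality $(M^\alpha)^2\leq 2S^\alpha$, which points the wrong way here). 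This produces an $\varepsilon$-dependent quadratic in $M^\alpha$ whose optimum gives precisely $\delta(\varepsilon,n)|H|^2$, while the ambient curvature terms combine via \eqref{sum3} to $(n-1-\varepsilon)\bar K_{max}$.
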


\begin{proof}
Let $\set{e_i}$ be  a local orthonormal frame of $TM$. For $\lambda,\mu\in[0,1]$, define
\begin{align*}
\varepsilon_1=\dfrac{e_1+\sqrt{-1}\mu e_2}{\sqrt{1+\mu^2}},\quad\varepsilon_2=\dfrac{e_3+\sqrt{-1}\lambda e_4}{\sqrt{1+\lambda^2}},\quad
\varepsilon_3 = \dfrac{\mu e_1-\sqrt{-1}e_2}{\sqrt{1+\mu^2}},\quad  \varepsilon_4=\frac{\lambda e_3-\sqrt{-1}e_4}{\sqrt{1+\lambda^2}}, \\
 \varepsilon_i = e_i,  \ \ 5\le i\le n.
\end{align*}
Then $\set{\varepsilon_i}$ is a local orthonormal frame of $T^{\Com}M$.
Similar as the proof of \autoref{subR0}, it is sufficient to
prove $R_{12\bar1\bar2}\geq0$ and the strict inequality holds for all frame $\set{e_i}$ and all numbers $\lambda, \mu\in[0,1]$ at some point.
Ricci curvature formula  \eqref{complexRicci} gives
\begin{align}\label{ric1}
&\frac{1}{2}\kh{Ric_{1\bar1}+Ric_{2\bar2}}\\
=&\frac{1}{2}\kh{\sum_{i\neq 1}\bar{R}_{1i\bar1\bar{i}}+\sum_{i\neq 2}\bar{R}_{2i\bar2\bar{i}}}+(n-1)\abs{H}^2+\frac{1}{2}\sum_\alpha\kh{(n-2)H^{\alpha}\kh{\mathring{h}^{\alpha}_{1\bar1}+
\mathring{h}^\alpha_{2\bar2}}-\sum_{i=1}^n\kh{\abs{\mathring{h}^{\alpha}_{1\bar i}}^2+\abs{\mathring{h}^{\alpha}_{2\bar i}}^2}},\notag
\\\label{ric2}
&\frac{1}{n-2}\sum_{i= 3}^n Ric_{i\bar i}\\
=&\frac{1}{n-2}\zkh{\sum_{i=3}^n\sum_{j\neq i}\bar{R}_{ij\,\bar{i}\bar{j}}
-\sum_\alpha\kh{(n-2)H^{\alpha}\kh{\mathring{h}^{\alpha}_{1\bar1}+\mathring{h}^\alpha_{2\bar2}}
+\sum_{i=3}^n\sum_{j=1}^n\abs{\mathring{h}^{\alpha}_{i\bar j}}^2}}+(n-1)\abs{H}^2.\notag
\end{align}
Assume
\begin{align*}
Ric_{i\bar i}+Ric_{j\bar j}\geq 2D,\quad\forall 1\leq i<j\leq n.
\end{align*}
Then
\begin{align*}
  D\leq\frac{Ric_{1\bar1} + Ric_{2\bar2}}{2},\quad
  D\leq\frac{\sum_{3\le j<k\le n}\kh{Ric_{j\bar{j}} +Ric_{k\bar{k}}}}{(n-2)(n-3)} =\frac{\sum_{i=3}^n Ric_{i\bar{i}}}{n-2}.
\end{align*}
Hence for every $0<\varepsilon\le1$, by using \eqref{ric1}, \eqref{ric2} and \eqref{complexGauss}, we get
\begin{align*}
  D\leq&\ \  \varepsilon\cdot \frac{Ric_{1\bar1} + Ric_{2\bar2}}{2} + (1-\varepsilon)\cdot\frac{\sum_{i=3}^n Ric_{i\bar{i}}}{n-2}\\
=&\ \frac{\varepsilon}{2}\sum_{i=1}^n\kh{\bar{R}_{1i\bar1\bar{i}}+\bar{R}_{2i\bar2\bar{i}}}
+\frac{1-\varepsilon}{n-2}\sum_{i=3}^n\sum_{j=1}^n\bar{R}_{ij\,\bar{i}\bar{j}}+(n-1)\abs{H}^2
-\frac{1-\varepsilon}{n-2}\sum_{i=3}^n\sum_{j=1}^n\abs{\mathring{h}^{\alpha}_{i\bar j}}^2\\
&+\sum_\alpha\zkh{\frac{n\varepsilon -2}{2}H^\alpha\kh{\mathring{h}^\alpha_{1\bar1}+\mathring{h}^\alpha_{2\bar2}}
- \frac{\varepsilon}{2}
\sum_{i=1}^n\kh{\abs{\mathring{h}^{\alpha}_{1\bar i}}^2+\abs{\mathring{h}^{\alpha}_{2\bar i}}^2}}\\
=&\ \frac{\varepsilon}{2}\sum_{i=3}^n\kh{\bar{R}_{1i\bar1\bar{i}}+\bar{R}_{2i\bar2\bar{i}}}
+\frac{1-\varepsilon}{n-2}\sum_{i=3}^n\sum_{j=1}^n\bar{R}_{ij\,\bar{i}\bar{j}}+\varepsilon R_{12\bar1\bar2}
+(n-1-\varepsilon)\abs{H}^2
-\frac{1-\varepsilon}{n-2}\sum_{i=3}^n\sum_{j=1}^n\abs{\mathring{h}^{\alpha}_{i\bar j}}^2\\
&+\sum_\alpha\zkh{\frac{(n-2)\varepsilon -2}{2}H^\alpha\kh{\mathring{h}^\alpha_{1\bar1}
+\mathring{h}^\alpha_{2\bar2}}-\frac{\varepsilon}{2}\kh{\abs{\mathring{h}^\alpha_{1\bar1}}^2
+2\mathring{h}^\alpha_{1\bar1}\mathring{h}^\alpha_{2\bar2}
+\abs{\mathring{h}^\alpha_{2\bar2}}^2}
- \frac{\varepsilon}{2}
\sum_{i=3}^n\kh{\abs{\mathring{h}^{\alpha}_{1\bar i}}^2+\abs{\mathring{h}^{\alpha}_{2\bar i}}^2}}\\
\le&\ \frac{\varepsilon}{2}\sum_{i=3}^n\kh{\bar{R}_{1i\bar1\bar{i}}+\bar{R}_{2i\bar2\bar{i}}}
+\frac{1-\varepsilon}{n-2}\sum_{i=3}^n\sum_{j=1}^n\bar{R}_{ij\,\bar{i}\bar{j}}+\varepsilon R_{12\bar1\bar2}
+(n-1-\varepsilon)\abs{H}^2\\
&+\sum_\alpha\zkh{\frac{(n-2)\varepsilon -2}{2}H^\alpha\kh{\mathring{h}^\alpha_{1\bar1}
+\mathring{h}^\alpha_{2\bar2}}-\kh{\frac{\varepsilon}{2}+\frac{1-\varepsilon}{(n-2)^2}}\kh{\mathring{h}^\alpha_{1\bar1}+\mathring{h}^\alpha_{2\bar2}}^2}\\
\le&\ \frac{\varepsilon}{2}\sum_{i=3}^n\kh{\bar{R}_{1i\bar1\bar{i}}+\bar{R}_{2i\bar2\bar{i}}}
+\frac{1-\varepsilon}{n-2}\sum_{i=3}^n\sum_{j=1}^n\bar{R}_{ij\,\bar{i}\bar{j}}+\varepsilon R_{12\bar1\bar2}
+\delta(\varepsilon,n)\abs{H}^2
\end{align*}
where in the second inequality, we have used
\begin{align*}
 \sum_{i=3}^n\sum_{j=1}^n\abs{\mathring{h}^{\alpha}_{i\bar j}}^2
 \ge \sum_{i=3}^n \abs{\mathring{h}^{\alpha}_{i\bar i}}^2
 \ge \frac{\kh{\sum_{i=3}^n \mathring{h}^{\alpha}_{i\bar i}}^2}{n-2}
 =\frac{\kh{\mathring{h}^{\alpha}_{1\bar 1}+\mathring{h}^{\alpha}_{2\bar 2}}^2}{n-2}.
\end{align*}
Therefore,
\begin{align}\label{D}
  \varepsilon R_{12\bar1\bar2}\geq D -\kh{\frac{\varepsilon}{2}\sum_{i=3}^n\kh{\bar{R}_{1i\bar1\bar{i}}+\bar{R}_{2i\bar2\bar{i}}}
+\frac{1-\varepsilon}{n-2}\sum_{i=3}^n\sum_{j=1}^n\bar{R}_{ij\,\bar{i}\bar{j}}
+\delta(\varepsilon, n)\abs{H}^2}.
\end{align}

We  claim that
\begin{align}\label{sum3}
\frac{\varepsilon}{2}\sum_{i=3}^n\kh{\bar{R}_{1i\bar1\bar{i}}+\bar{R}_{2i\bar2\bar{i}}}
+\frac{1-\varepsilon}{n-2}\sum_{i=3}^n\sum_{j=1}^n\bar{R}_{ij\,\bar{i}\bar{j}}
\le (n-1-\varepsilon)\bar K_{max}.
\end{align}
If this is true, then combined with \eqref{D}, we have
\begin{align}\label{D1}
  \varepsilon R_{12\bar1\bar2} \geq D -(n-1-\varepsilon)\bar K_{max}
+\delta(\varepsilon, n)\abs{H}^2.
\end{align}

By the assumption of the theorem, we get
\begin{align*}
Ric(\varepsilon_1,\bar\varepsilon_1)+Ric(\varepsilon_2, \bar\varepsilon_2)=&\dfrac{Ric(e_1,e_1)+\mu^2Ric(e_2,e_2)}{1+\mu^2}+\dfrac{Ric(e_3,e_3)+\lambda^2	Ric(e_4,e_4)}{1+\lambda^2}\\
=&\dfrac{\left(Ric(e_1,e_1)+Ric(e_3,e_3)\right)+\lambda^2\left(Ric(e_1,e_1)+Ric(e_4,e_4)\right)}{\left(1+\lambda^2\right)\left(1+\mu^2\right)}\\
&+\dfrac{\mu^2\left(Ric(e_2,e_2)+Ric(e_3,e_3)\right)+\lambda^2\mu^2\left(Ric(e_2,e_2)+Ric(e_4,e_4)\right)}{\left(1+\lambda^2\right)\left(1+\mu^2\right)}\\
\geq&2\kh{(n-1-\varepsilon)\bar K_{max}+\delta(\varepsilon, n)\abs{H}^2}.
\end{align*}
Therefore, by the arbitrariness of  $e_1, e_2, e_3, e_4$, we can take
\begin{align*}
D=(n-1-\varepsilon)\bar K_{max}+\delta(\varepsilon, n)\abs{H}^2.
\end{align*}
Combining the above inequality with \eqref{D1}, we have
\begin{align*}
\varepsilon R(\varepsilon_1,\varepsilon_2,\bar\varepsilon_1,\bar\varepsilon_2)\geq D -\kh{(n-1-\varepsilon)\bar K_{max}+\delta(\varepsilon, n)\abs{H}^2}=0.
\end{align*}
Therefore we have $R_{12\bar1\bar2}\ge 0$, and strict inequality holds for all frame $\set{e_i}$ and all numbers $\lambda, \mu\in[0,1]$ at some point.

What is left is to prove the inequality \eqref{sum3}.
Under the given basis of $T^{\mathbb C}M$,
a direct computation  gives
\begin{align}
\label{eq:4-111} \bar{R}(\varepsilon_1,\varepsilon_2,\bar\varepsilon_1,\bar\varepsilon_2)
=&\dfrac{\bar R_{1313}+\mu^2\bar R_{2323}+\lambda^2\bar R_{1414}+\lambda^2\mu^2\bar R_{2424}-2\lambda\mu\bar R_{1234}}{\left(1+\lambda^2\right)\left(1+\mu^2\right)}\\
\label{eq:4-211}
\sum_{i=1}^n\kh{\bar{R}(\varepsilon_1,\varepsilon_i,
\bar\varepsilon_1,\bar\varepsilon_i)+\bar{R}(\varepsilon_2,
\varepsilon_i,\bar\varepsilon_2,\bar\varepsilon_i)}=&
\sum_{i=1}^n\zkh{\dfrac{\bar{R}_{1i1i}+\mu^2\bar{R}_{2i2i}}{1+\mu^2}+\dfrac{\bar R_{3i3i}+\lambda^2\bar R_{4i4i}}{1+\lambda^2}},\\
\sum_{i=1}^n\kh{\bar{R}(\varepsilon_3,\varepsilon_i,
\bar\varepsilon_3,\bar\varepsilon_i)+\bar{R}(\varepsilon_4,
\varepsilon_i,\bar\varepsilon_4,\bar\varepsilon_i)}
=&\sum_{i=1}^n\sum_{i=1}^n\zkh{\dfrac{\mu^2\bar{R}_{1i1i}
+\bar{R}_{2i2i}}{1+\mu^2}+\dfrac{\lambda^2\bar R_{3i3i}
+\bar R_{4i4i}}{1+\lambda^2}},\notag\\
\sum_{j=1}^n\bar{R}(\varepsilon_i,\varepsilon_j,
\bar\varepsilon_i,\bar\varepsilon_j)=&\sum_{j=1}^n
\bar R_{ijij},\quad 5\le i\le n.\notag
\end{align}

Note that,
\begin{align*}
&\dfrac{\varepsilon}{2}\sum_{i=3}^n\kh{\bar{R}(\varepsilon_1,\varepsilon_i,\bar\varepsilon_1,\bar\varepsilon_i)+\bar{R}(\varepsilon_2,\varepsilon_i,\bar\varepsilon_2,\bar\varepsilon_i)}
+\dfrac{1-\varepsilon}{n-2}\sum_{i=3}^n\sum_{j=1}^n\bar{R}(\varepsilon_i,\varepsilon_j,\bar\varepsilon_i,\bar\varepsilon_j)\\
=&\dfrac{\varepsilon}{2}\sum_{i=1}^n\kh{\bar{R}(\varepsilon_1,\varepsilon_i,\bar\varepsilon_1,\bar\varepsilon_i)+\bar{R}(\varepsilon_2,\varepsilon_i,\bar\varepsilon_2,\bar\varepsilon_i)}-\varepsilon\bar{R}(\varepsilon_1,\varepsilon_2,\bar\varepsilon_1,\bar\varepsilon_2)
+\dfrac{1-\varepsilon}{n-2}\sum_{i=3}^n\sum_{j=1}^n\bar{R}(\varepsilon_i,\varepsilon_j,\bar\varepsilon_i,\bar\varepsilon_j)\\
\leq&\frac{\varepsilon}{2}\sum_{i=1}^n\kh{\bar{R}(\varepsilon_1,\varepsilon_i,\bar\varepsilon_1,\bar\varepsilon_i)+\bar{R}(\varepsilon_2,\varepsilon_i,\bar\varepsilon_2,\bar\varepsilon_i)}-\varepsilon\bar{R}(\varepsilon_1,\varepsilon_2,\bar\varepsilon_1,\bar\varepsilon_2)
+\frac{1-\varepsilon}{n-2}\cdot(n-2)(n-1)\bar K_{max}.
\end{align*}
By using \eqref{eq:4-111} and \eqref{eq:4-211}, we have

\begin{align*}
&\frac{1}{2}\sum_{i=1}^n\kh{\bar{R}
(\varepsilon_1,\varepsilon_i,\bar\varepsilon_1,
\bar\varepsilon_i)+\bar{R}(\varepsilon_2,\varepsilon_i,
\bar\varepsilon_2,\bar\varepsilon_i)}-\bar{R}
(\varepsilon_1,\varepsilon_2,\bar\varepsilon_1,\bar\varepsilon_2)\\
=
&\frac{1}{2}\sum_{i=1}^4\zkh{\frac{\bar{R}_{1i1i}+\mu^2\bar{R}_{2i2i}}{1+\mu^2}
+\frac{\bar{R}_{3i3i}+\lambda^2\bar{R}_{4i4i}}{1+\lambda^2}}
+\frac{1}{2}\sum_{i=5}^n\zkh{\frac{\bar{R}_{1i1i}+\mu^2\bar{R}_{2i2i}}{1+\mu^2}
+\frac{\bar{R}_{3i3i}+\lambda^2\bar{R}_{4i4i}}{1+\lambda^2}}\\
&-\dfrac{\bar R_{1313}+\mu^2\bar R_{2323}+\lambda^2\bar R_{1414}+\lambda^2\mu^2\bar R_{2424}-2\lambda\mu\bar R_{1234}}{\left(1+\lambda^2\right)\left(1+\mu^2\right)}\\
\le
&\frac{1}{2}\sum_{i=1}^4\zkh{\frac{\bar{R}_{1i1i}+\mu^2\bar{R}_{2i2i}}{1+\mu^2}
+\frac{\bar{R}_{3i3i}+\lambda^2\bar{R}_{4i4i}}{1+\lambda^2}}
+(n-4)\bar{K}_{max}\\
&-\dfrac{\bar R_{1313}+\mu^2\bar R_{2323}+\lambda^2\bar R_{1414}+\lambda^2\mu^2\bar R_{2424}-2\lambda\mu\bar R_{1234}}{\left(1+\lambda^2\right)\left(1+\mu^2\right)}\\
   =&\kh{\frac{1}{2}-\frac{2\lambda\mu}{3(1+\lambda^2)(1+\mu^2)}}
   \kh{\bar{R}_{1212}+\bar{R}_{3434}} +\kh{\frac{2+\mu^2+\lambda^2}{2(1+\lambda^2)(1+\mu^2)}
   -\frac{3+\lambda\mu}{3(1+\lambda^2)(1+\mu^2)}}\bar{R}_{1313}\\
   &+\kh{\frac{1+2\lambda^2+\lambda^2\mu^2}{2(1+\lambda^2)(1+\mu^2)}
   -\frac{3\lambda^2+\lambda\mu}{3(1+\lambda^2)(1+\mu^2)}}\bar{R}_{1414}\\
   &+\kh{\frac{1+2\mu^2+\lambda^2\mu^2}{2(1+\lambda^2)(1+\mu^2)}
   -\frac{3\mu^2+\lambda\mu}{3(1+\lambda^2)(1+\mu^2)}}\bar{R}_{2323}\\
   &+\kh{\frac{\lambda^2+\mu^2+2\lambda^2\mu^2}{2(1+\lambda^2)(1+\mu^2)}
   -\frac{3\lambda^2\mu^2+\lambda\mu}{3(1+\lambda^2)(1+\mu^2)}}\bar{R}_{2424}
   +(n-4)\bar{K}_{max}\\
   & +\dfrac{\lambda\mu\left(\bar K(e_1+e_3, e_2+e_4) +\bar K(e_1-e_3, e_2-e_4)
+ \bar K(e_2+e_3, e_1-e_4) +\bar K(e_2-e_3, e_1+e_4)\right)}{6(1+\lambda^2)(1+\mu^2)}\\
\le
& \frac{1}{6(1+\lambda^2)(1+\mu^2)}\left[\kh{3(1+\lambda^2)(1+\mu^2)-4\lambda\mu}\cdot 2+\left(3(\lambda^2+\mu^2)-2\lambda\mu\right) +\left(3(1+\lambda^2\mu^2)-2\lambda\mu\right)\right.\\
&\left.\qquad\qquad\qquad\quad+\left(3(1+\lambda^2\mu^2)-2\lambda\mu\right)
+\left(3(\lambda^2+\mu^2)-2\lambda\mu\right) +16\lambda\mu\right]\bar{K}_{max}+(n-4)\bar{K}_{max}\\
=&(n-2)\bar{K}_{max},
\end{align*}
where in the last inequality, we have used the fact that all the coefficients are non-negative
for $\lambda,\mu\in[0,1]$, thus
we can replace $\bar R_{ijij}$ with $\bar K_{max}$.
Therefore,
\begin{align*}
&\frac{\varepsilon}{2}\sum_{i=3}^n\kh{\bar{R}(\varepsilon_1,
\varepsilon_i,\bar\varepsilon_1,\bar\varepsilon_i)
+\bar{R}(\varepsilon_2,\varepsilon_i,\bar\varepsilon_2,
\bar\varepsilon_i)}
+\frac{1-\varepsilon}{n-2}\sum_{i=3}^n\sum_{j=1}^n\bar{R}(\varepsilon_i,\varepsilon_j,\bar\varepsilon_i,\bar\varepsilon_j) \\
\leq&\varepsilon(n-2)\bar K_{max}+\frac{1-\varepsilon}{n-2}\cdot(n-2)(n-1)\bar K_{max}\\
=&(n-1-\varepsilon)\bar K_{max}.
\end{align*}
We complete the proof.
\end{proof}

If we take $\varepsilon =1$ in \autoref{epsilon}, we have \autoref{subric1}.
If we take $\varepsilon =\frac{2}{n-2}$ in \autoref{epsilon}, we have \autoref{subric2}.

We also can take $\varepsilon =\frac{2(n^2-6n+10)}{(n-4)(n^2-4n+2)}$ in \autoref{epsilon} to make the
coefficient $\delta(\varepsilon, n)$ to be minimal when $n\ge 6$.
\begin{cor}\label{cor4}
Suppose $M^n (n\ge 6)$ is a  closed simply connected submanifold of $\bar{M}^N$ satisfying
\begin{align*}
&\frac{Ric^{[2]}}{2}\ge \kh{n-1-\frac{2(n^2-6n+10)}{(n-4)(n^2-4n+2)}}\bar K_{max}+\frac{(n-2)(n-3)(n-4)n^2}{(n^2-4n+2)^2}\abs{H}^2,
\end{align*}
with strict inequality at some point,
then $M$ is   diffeomorphic to $\Lg{S}^n$.

\end{cor}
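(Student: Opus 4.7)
The plan is to deduce this corollary as a direct application of \autoref{epsilon} by choosing the value of $\varepsilon$ that minimizes the coefficient $\delta(\varepsilon,n)$ over the admissible interval $(0,1]$. First I would treat $\delta(\varepsilon,n)$ as a scalar function of $\varepsilon$ for fixed $n\geq 6$ and locate its critical points.

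Writing $u=u(\varepsilon)=(n-4)\varepsilon+2$ and $v=v(\varepsilon)=2+(n^2-4n+2)\varepsilon$, so that $\delta(\varepsilon,n)=\frac{n^2}{8}\cdot\frac{u^2}{v}$, the stationarity condition $2u'v=uv'$, namely $2(n-4)v=u(n^2-4n+2)$, is linear in $\varepsilon$ and solves uniquely to
\begin{align*}
\varepsilon^\star=\frac{2(n^2-6n+10)}{(n-4)(n^2-4n+2)}.
\end{align*}
A second-derivative (or local sign) check confirms this is a minimum. For $n\geq 6$ one verifies $\varepsilon^\star\in(0,1]$: at $n=6$ it equals $5/7$, and it decreases like $2/n$ as $n\to\infty$. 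For $n=5$ it equals $10/7>1$ and for $n=4$ the denominator vanishes, which is exactly why the corollary is restricted to $n\geq 6$.

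It remains to check that $\varepsilon^\star$ reproduces the two coefficients stated in the corollary. Direct algebra yields
\begin{align*}
u(\varepsilon^\star)=\frac{4(n-2)(n-3)}{n^2-4n+2},\qquad v(\varepsilon^\star)=\frac{2(n-2)(n-3)}{n-4},
\end{align*}
so that
\begin{align*}
\delta(\varepsilon^\star,n)=\frac{n^2}{8}\cdot\frac{u(\varepsilon^\star)^2}{v(\varepsilon^\star)}=\frac{(n-2)(n-3)(n-4)n^2}{(n^2-4n+2)^2},
\end{align*}
which is exactly the coefficient of $\abs{H}^2$ in the corollary; the coefficient of $\bar K_{max}$ is just $n-1-\varepsilon^\star$ by definition. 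Hence the pinching hypothesis of the corollary is precisely the hypothesis of \autoref{epsilon} with $\varepsilon=\varepsilon^\star$, and the conclusion follows.

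The proof is essentially an algebraic optimization once \autoref{epsilon} is in hand; the only mildly delicate step is checking that $\varepsilon^\star$ lands in the admissible interval $(0,1]$ for $n\geq 6$ (and is genuinely a minimum rather than a boundary extremum), so that \autoref{epsilon} is actually applicable. This admissibility check explains the dimensional restriction imposed in the statement.
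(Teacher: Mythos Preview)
Your proposal is correct and follows exactly the approach indicated in the paper: substitute the minimizing value $\varepsilon^\star=\frac{2(n^2-6n+10)}{(n-4)(n^2-4n+2)}$ into \autoref{epsilon}. The paper merely states this value without showing the optimization or the admissibility check, whereas you supply the algebra explicitly; the argument is otherwise identical.
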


\begin{rem}
  It is easy to check, for $n\ge 6$,
  $$
  n-1-\frac{2(n^2-6n+10)}{(n-4)(n^2-4n+2)}< \frac{(n-2)(n-3)(n-4)n^2}{(n^2-4n+2)^2}< \frac{n(n-3)}{n-2}.
  $$
  Therefore, when $n\ge6$, \autoref{cor4} implies \autoref{subric2}.
\end{rem}

\vspace{2ex}

\end{document}